\numberwithin{equation}{section}
\theoremstyle{definition}
\newtheorem{thm}{Theorem}[section]
\newtheorem{defn}[thm]{Definition}
\newtheorem{lemma}[thm]{Lemma}
\newtheorem{prop}[thm]{Proposition}
\newtheorem{cor}[thm]{Corollary}
\newtheorem{con}[thm]{Conjecture}\newtheorem{prob}[thm]{Problem}
\newtheorem{ex}[thm]{Example}
\newtheorem{remark}[thm]{Remark}
\newenvironment{introthm}[1]
  {\innercustomthm}
  {\endinnercustomthm}
\theoremstyle{definition}
\newcommand{\R}{\mathbb{R}}
\let\originalleft\left
\let\originalright\right
\renewcommand{\left}{\mathopen{}\mathclose\bgroup\originalleft}
\renewcommand{\right}{\aftergroup\egroup\originalright}
\newcommand{\auth}[0]{{Renan Assimos, Bal\'azs M\'ark B\'ek\'esi} and Giuseppe Gentile}
\newcommand{\tit}[0]{{Remarks on the generalised Calabi-Yau problem in higher codimension}}
\newcommand{\kw}[0]{{Harmonic maps, minimal submanifolds, convexity, pullvexity, Omori-Yau maximum principle, stochastic completeness, Calabi-Yau conjecture.}}
\title{\tit}
\author{\auth\thanks{Correspondence: \href{mailto:renan.assimos@math.uni-hannover.de}{renan.assimos@math.uni-hannover.de}, \href{https://bmbekesi.github.io/ }{balazs.bekesi@math.uni-hannover.de}, \href{mailto:giuseppe.gentile@math.uni-hannover.de}{giuseppe.gentile@math.uni-hannover.de}}}
\affil{\small Leibniz University Hannover, Welfengarten 1, 30167 Hannover, Germany}
\date{}
\pgfplotsset{compat=1.16}
\begin{document}

\maketitle

\begin{abstract}\vspace{-1.5cm}
By introducing a more flexible notion of convexity, we obtain a new Omori-Yau maximum principle for harmonic maps. In the spirit of the Calabi-Yau conjectures, this principle is more suitable for studying the unboundedness of certain totally geodesic projections of minimal submanifolds of higher codimension. We further explore this maximum principle by applying it to conformal maps, harmonic maps into Cartan-Hadamard manifolds, as well as cone, wedge and halfspace theorems.
\end{abstract}

\textbf{Subject classification:} 53C43, 58J65, 53C42, 53A10\medskip\\
\textbf{Keywords: }{Harmonic maps, minimal submanifolds, convexity, Omori-Yau maximum principle, stochastic completeness, Calabi-Yau conjecture.}
\begin{spacing}{0.85}
\tableofcontents
\end{spacing}



\section{Introduction}

In 1966, Calabi (cf. \cite{Calabi}) posed two conjectures about minimal hypersurfaces in $\R^n$, which we state here in a slightly more general form.
\begin{enumerate}
    \item A minimal submanifold of codimension one in $\R^n$ is unbounded.
    \item Let $P$ be an orthogonal projection of rank $k$ on $\R^n$ and consider a non-planar minimal submanifold $M$ of $\R^n$. If $k=n-2$ and the submanifold is of codimension one, then the projection of $M$ via $P$ is unbounded. 
\end{enumerate}

With the stated generality, the conjectures turn out to be false.
The first counterexample is for (b), usually referred to as the “more ambitious conjecture”. Such a counterexample was obtained by Jorge and Xavier; in \cite{jorgexavier} the authors construct a complete immersed minimal surface between two planes in $\R^3$.
Regarding (a), an astonishing counterexample has been provided by Nadirashvili in \cite{Nadirashvili} where a complete minimal disk inside a ball in $\R^3$ is constructed.
A crucial step in the analysis of counterexamples to the Calabi conjectures is provided by Mart\'in and Morales in \cite{martin2004asymptotic}. 
There the authors present a counterexample to a variation to the Calabi conjecture (a), asserting the non-existence of properly immersed complete minimal surfaces inside a ball in $\R^3$.
Indeed, Mart\'in and Morales develop new machinery to strengthen Nadirashvili's construction, obtaining a properly immersed disk. 
It is important to note that the notion of properness is refined, in the sense that the disk is properly immersed with respect to the induced topology on the ball. Despite the aforementioned counterexamples, the two Calabi conjectures led the community to a better understanding of the properties of minimal immersions in terms of embeddedness and properness.
This deepened understanding brought to a proof of both conjectures for complete embedded minimal surfaces of finite topology in $\R^3$ due to Colding and Minicozzi\cite{ColdingMinicozzi}.

\medskip

The literature covering the Calabi conjectures for surfaces in $\R^3$ is quite extensive; so we refer the interested reader to a comprehensive and up to date introduction from Meeks III, Per\'ez and Ros~\cite{Meeks2021embedded}.
We take the opportunity to also highlight the beautiful result they obtain in this paper stating that a complete embedded minimal surface of finite genus with compact boundary and exactly one limit end must be proper.
Such a result is a first, but substantial, step towards a more general Calabi type conjecture for embedded surfaces.

\begin{con}[Meeks-Per\'ez-Ros, \cite{Meeks2021embedded}]
Every connected, complete embedded minimal surface $M \subset \R^3$ of finite genus and compact (possibly empty) boundary is properly embedded in $\R^3$.    
\end{con}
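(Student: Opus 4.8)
The plan is to establish properness by reducing the general situation to the single-limit-end case, which I take as given — it is precisely the theorem of \cite{Meeks2021embedded} quoted above. For a complete surface, being properly embedded is equivalent to $M$ being a closed subset of $\R^3$, and the Colding--Minicozzi theorem already disposes of the finite-topology case (finite genus together with \emph{finitely} many ends). The only configurations left to treat are therefore those in which $M$ has infinitely many ends. Since the space of ends $\mathcal{E}(M)$ is compact, an infinite end set necessarily has a non-isolated point, so $M$ possesses at least one limit end. Thus the conjecture is equivalent to the assertion that a connected, complete embedded minimal surface of finite genus and compact boundary with one or more limit ends is properly embedded.

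First I would argue by contradiction: suppose $M$ is complete, of finite genus, but not proper. Then $\overline{M}\setminus M$ is non-empty, and by the minimal lamination closure theorem $\overline{M}$ carries the structure of a minimal lamination $\mathcal{L}$ on which $M$ accumulates. The next step is to pin down the local geometry of $M$ near $\mathcal{L}$ via the Colding--Minicozzi one-sided curvature estimate and the multivalued-graph description: near a limit leaf $M$ must spiral as a sequence of almost-flat multigraphs, and the limit leaves, being limits of leaves of the lamination, are stable and hence planar. The finite-genus hypothesis is then brought in to confine the topological complexity: outside a compact set $M$ is planar, so that all of its accumulation on $\mathcal{L}$ and all of its topology at infinity are encoded by the structure of the ends rather than by interior handles.

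Having localised the failure of properness at a limit end, I would try to isolate a single limit end by cutting $M$ along a suitable compact curve, producing a subdomain of finite genus with compact boundary and exactly one limit end, to which the one-limit-end theorem applies and yields properness there; iterating over the limit ends would then give properness of all of $M$. The crucial auxiliary fact to establish along the way is that a finite-genus example can carry only finitely many limit ends — ideally at most two, in analogy with the properly embedded case, where the ordering of the ends and the middle-end analysis force this bound.

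The hard part will be the circularity inherent in this program. Essentially all of the structural theory of ends of minimal surfaces — the ordering theorem, the representation of ends as graphs or as ends of helicoid- and Riemann-type examples, and the bound on the number of limit ends — is proved under the standing hypothesis of properness, which is exactly what we are trying to conclude. Breaking this bootstrap requires a priori local structure results valid near a putative non-proper limit point \emph{without} presupposing that $M$ is closed; this is precisely the delicate analysis that the one-limit-end theorem performs in its special case. Extending it to handle several limit ends simultaneously, and ruling out wild, Cantor-like accumulation of ends compatible with finite genus, is where I expect the genuine difficulty to lie.
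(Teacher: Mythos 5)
This statement is not a theorem of the paper at all: it is an open conjecture, quoted from Meeks--Per\'ez--Ros \cite{Meeks2021embedded}, and the paper offers no proof of it (nor does the cited reference --- what is proved there is only the special case of exactly one limit end). So there is no proof in the paper to compare yours against, and the real question is whether your proposal closes the conjecture. It does not, and you say so yourself: the entire content of the conjecture beyond the known cases (finite topology via Colding--Minicozzi, one limit end via Meeks--Per\'ez--Ros) is concentrated in exactly the step you flag as ``the hard part.'' Your reduction to the case of at least one limit end is correct and standard, and the ingredients you name (minimal lamination closure theorem, one-sided curvature estimate, stability and flatness of limit leaves) are the right toolbox. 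But the plan to ``cut along a suitable compact curve to isolate a single limit end and apply the one-limit-end theorem'' is not a proof step: without already knowing properness you have no ordering theorem for the ends, no bound on the number of limit ends, and no guarantee that a separating compact curve of the required type exists or that the resulting piece is complete with compact boundary in the sense the one-limit-end theorem requires. The claim that ``a finite-genus example can carry only finitely many limit ends, ideally at most two'' is itself only known under the properness hypothesis.

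In short: your write-up is an accurate survey of the state of the art and a reasonable research program, but it contains a genuine and acknowledged gap --- the bootstrap from local lamination structure near a non-proper limit point to global control of the limit ends --- and that gap is precisely why the statement remains a conjecture. It should not be presented as a proof.
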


The motivation for the present work was rather ambitious and fundamentally different compared to the case of surfaces in 3-dimensional Euclidean space. 
The authors were trying to understand which other instances of the Calabi conjecture could hold in higher dimensions and codimension. Therefore, the following \textbf{generalised Calabi-Yau problem} can be seen as the starting point of our investigation.

\begin{prob}[Generalised Calabi-Yau problem]\label{prob:genCalabiYau}
    Let $M$ be a non-compact minimally immersed submanifold of a manifold $N$ and let $P$ be a totally geodesic map from $N$ into another manifold $B$. Is the projection of $M$ via $P$ unbounded?
\end{prob}

The problem has the Calabi conjectures as a special case. Conjecture (a) is the case with $N = B = \R^n$ and $P=\mathrm{Id}$. For conjecture (b), the totally geodesic map $P$ is an orthogonal projection of rank $(n-2)$. 

\medskip

Colding and Minicozzi proved the Calabi-Yau conjectures as a consequence of a halfspace theorem. In particular, they show that complete minimally embedded surfaces of finite topology are proper. This evidentiates the strength of their result. Our attempt to the Calabi-Yau conjectures goes in higher codimension, where it is potentially not true that embeddedness implies properness. Therefore, it is unknown if Colding-Minicozzi's result still holds in this setting.
Compared to them, we deal with a more general class of submanifolds, indeed we do not require embeddedness or completeness. On the other hand, we pay the price of having to assume something extra. This extra could be comparable to the conclusion of Colding-Minicozzi and it is \textit{stochastic completeness}.

\medskip
The interplay between stochastic completeness and minimal immersions traces back to \cite{Omori} via a new maximum principle for non-compact manifolds. Indeed, Omori provides a maximum principle under a uniform lower bound on the sectional curvature. The next notable achievement appeared in the groundbreaking work of Yau \cite{Yau}. In there, the lower bound was weakened to a uniform lower bound on the Ricci curvature and a weaker maximum principle, compared to Omori's result, was obtained. Subsequently, the maximum principle of Yau was promoted to a definition and bears the names of Omori and Yau. By building on the foundational work of Grigori'yan \cite{Grigoryan} a spectacular link between stochastic completeness and the Omori-Yau maximum principle was established by Pigola, Rigoli and Setti \cite{PigolaRigoliSetti}. Since stochastic completeness is a property of the Brownian motion on the Riemannian manifold, the aforementioned result displays a fascinating bridge between stochastic and geometric analysis.

\medskip

The applications of the Omori-Yau maximum principle in the analysis of geometric problems are immeasurable. Amongst the various contributions, we would like to highlight some interesting developments. For instance, in \cite{maririgolicone} the authors provide non-existence results for maps into Euclidean cones depending on various parameters. A bi-halfspace theorem for translating solitons of the mean curvature flow is obtained in \cite{niels}. Further halfspace and intersection results for stochastically complete minimal and CMC surfaces in $\R^3$ appear in \cite{Leandro}. In \cite{theG} stochastic completeness plays a major role in proving the long-time existence of the prescribed mean curvature flow in a special class of Lorentzian manifolds. 

\medskip

Most of the previous results, even the classical papers of Omori and Yau, only consider functions on the underlying Riemannian manifold, i.e. the target of their maps is $\R$. Thus, the following natural question arises:
\begin{quote}
    Let $M$ be a manifold satisfying the Omori-Yau maximum principle. What can we conclude for maps $u:M\rightarrow N$?
\end{quote}

The aforementioned question combined with the generalised Calabi-Yau problem is the genesis of the ideas of this paper.

In a previous work \cite{assimosJost} a foliated version of Sampson's maximum principle \cite{sampson} has been introduced. Subsequently, in \cite{ABG} the authors generalised the maximum principle of \cite{assimosJost} to the non-compact case. As an application, non-existence results for proper harmonic maps from possibly non-compact manifolds into Riemannian cones were obtained. 

The starting point of our work was to replace the classical maximum principle in the proof of Sampson by the one of Omori and Yau. As it will be clear later this is the core technical idea driving the work.

\subsection{Outline and summary of the main results}

In Section 2 we introduce the notion of pullback convexity (Definition \ref{def:pullvexity}), which is a generalised notion of convexity. The basic idea is that convexity of $f:N \rightarrow \R$ should be adapted to the map $u:M\rightarrow N$ via the condition $tr_g(u^*\mathrm{Hess}\,f)\geq 0$. The analysis of this condition allows to conclude results about $\Delta_M(f\circ u)$ for $u$ being a harmonic map. In a forthcoming paper, we will investigate non-harmonic maps in combination with this flexible notion of convexity.

Then, we investigate some basic properties of pullvexity and relate them to familiar notions of convexity, mean-convexity, $k$-convexity and $k$-mean convexity. We conclude section 2 by explaining a relation arising for the second fundamental form of a pullvex hypersurfaces.

\medskip

We begin Section 3 by recalling the Omori-Yau maximum principle (Definition \ref{def:OY}) and proceed to prove a maximum principle for harmonic maps on manifolds satisfying the Omori-Yau maximum principle. Namely:

\begin{introthm}{\ref{thm:HOY}}[Omori-Yau for harmonic maps]
Let $(M,g)$ be stochastically complete and $u:(M,g) \rightarrow (N,h)$ be a harmonic map.
\begin{enumerate}
    \item For any strongly pullvex function $f:N \rightarrow \R$ the composition $f\circ u$ is subharmonic on $M$ and unbounded from above.
    \item Moreover, if there is a $r < \sup(f \circ u)$, then the pullvexity only needs to hold on $f^{-1}[r,\infty)$.
\end{enumerate}
\end{introthm}

By means of Theorem \ref{thm:HOY}, we recover several classical statements like a pullvex version of Sampson's maximum principle (Proposition \ref{prop:pullvex_sampson}).

\medskip

The Calabi conjectures for stochastically complete manifolds will be a corollary of the next main theorem describing geometric ways of constructing strongly pullvex functions.

\begin{introthm}{\ref{thm:tomography}}[Tomography theorem]
    Let $(M,g)$ be stochastically complete and consider the smooth maps $u:(M,g) \rightarrow (N,h)$, $P:(N,h) \rightarrow (B,\langle \cdot, \cdot \rangle)$ and $\eta:(B,\langle \cdot, \cdot \rangle)\rightarrow \R$ between Riemannian manifolds. If one of the cases
    \begin{enumerate}
    \item \begin{enumerate}[label=(\roman*)]
            \item $u$ is harmonic and non-singular at infinity,
            \item either $P$ is totally geodesic and $k$-wide with $k \geq \mathrm{rk}\,u$ or 
            \item[(ii')] $P$ is totally geodesic, strongly horizontally conformal with the rank condition $\mathrm{rk}\, u + \mathrm{rk}\, P - \dim N \geq 1$ and
            \item $\eta$ is strongly convex
        \end{enumerate}
    \item \begin{enumerate}[label=(\roman*)]
            \item $u$ is harmonic and strongly (horizontally) conformal,
            \item $P$ is totally geodesic and
            \item $\eta$ is strongly $k$-mean pullvex with respect to $P$ for $k \geq \mathrm{rk}\,u$
        \end{enumerate}
    \item \begin{enumerate}[label=(\roman*)]
            \item $u$ is harmonic and strongly (horizontally) conformal,
            \item $P$ is totally geodesic and strongly horizontally conformal,
            \item the dimensional condition $\mathrm{rk}\, u + \mathrm{rk}\, P - \dim N \geq k$ and
            \item $\eta$ is strongly $k$-mean convex
        \end{enumerate}
    \end{enumerate}
    holds, then $f:= \eta \circ P$ is strongly pullvex with respect to $u$ and $f\circ u$ is unbounded.
\end{introthm}

The previous theorem settles multiple cases of Problem \ref{prob:genCalabiYau}.

\begin{introthm}{\ref{thm:calabi}}[Calabi conjectures]
    Let $M$ be a stochastically complete manifold and $u:M \rightarrow \R^n$ a minimal immersion.
    \begin{enumerate}
        \item The image of $u$ is unbounded.
        \item Let $P:\R^n \rightarrow \R^n$ be an orthogonal projection. If $\mathrm{rk}\,u + \mathrm{rk}\, P - n \geq 1$, then $P \circ u$ is unbounded.
    \end{enumerate}
\end{introthm}

Note that if $u(M)$ is a hypersurface, $n\geq 4$, and $P$ a projection of rank $(n-2)$, (b) is the “more ambitious” Calabi conjecture. 
Another consequence is an analogue of the perturbed cone theorem appearing in \cite{ABG}.

\begin{introthm}{\ref{thm:perturbedwegde}}[Perturbed wedge theorem]
    Let $(M,g)$ be a stochastically complete manifold and $u:M \rightarrow \R^n$ a harmonic map that is non-singular at infinity. The image of $u$ cannot be contained in a wedge region of a perturbed wedge of type $(n,k)$ with $\dim M + k - n \geq 1$.
\end{introthm}

Thereafter, in Theorem \ref{thm:simplextrap} a local version of the wedge theorem is proven. Lastly, the condition of parabolicity is touched upon for concluding a halfspace result (Proposition \ref{prop:parabolichalfspace}).

\subsection{Notation and preliminaries}

\paragraph{Notation and conventions}
$(M,g)$ and $(N,h)$ will denote Riemannian manifolds without boundary of dimension $m$ and $n$ respectively, where $M$ is assumed to be the domain and $N$ is the target of maps. We will always assume that $M$ and $N$ are connected. The geodesic completeness of $M$ and $N$ is not a priori assumed. Smooth maps between $M$ and $N$ are referred to as $u:M \rightarrow N$ and smooth functions on $N$ are mostly denoted by $f$.
\paragraph{Harmonic maps}
Let $\nabla^M$ and $\nabla^N$ be the Levi-Civita connections on $M$ and $N$ respectively. For a smooth map $u:M \rightarrow N$ we can form the connection $\nabla = (\nabla^M)^* \otimes u^*\nabla^N$ on $T^*M \otimes u^*TN$. If $f:M \rightarrow \R$ is a function then $\nabla df$ is denoted by $\mathrm{Hess}\, f$. The \textbf{tension field} of $u$ is defined as $\Delta u = \mathrm{tr}_g(\nabla du)$ and $u$ is \textbf{harmonic} if $\Delta u = 0$. For $u$ being a function, the tension field of $u$ is the usual Laplace-Beltrami operator $\Delta_Mu$. An equivalent formulation of harmonicity is given as the (compact) variation of the \textbf{Dirichlet energy} $E[u] = \frac{1}{2}\int_M e(u) \mathrm{dVol}_g$ where $e(u):= \|du\|^2 :=\mathrm{tr}_g(u^*h)$ is the \textbf{energy density} of $u$. We refer the reader to \cite{eellslemaire} for more on the theory of harmonic maps.

\medskip

Some examples of harmonic maps are geodesics, minimal submanifolds, holomorphic maps between K\"ahler manifolds or \textbf{totally geodesics maps}, i.e. maps satisfying $\nabla du = 0$.

\medskip
The following proposition consists of two composition formulas and lies, in some sense, at the heart of our whole work.
\begin{prop}[Proposition 2.20 in \cite{eellslemaire}]\label{prop:compositionformula}
    Let $u:(M,g) \rightarrow (N,h)$ and $P:(N,h) \rightarrow (B,\langle \cdot, \cdot \rangle)$ be smooth maps between Riemannian manifolds. Then the composition formulas
    \begin{enumerate}
        \item $\nabla d(P\circ u) = dP(\nabla du) + u^*\nabla dP$ and
        \item $\Delta (P \circ u) = dP(\Delta u) + \mathrm{tr}_g(u^*\nabla dP)$
    \end{enumerate}
    hold.
\end{prop}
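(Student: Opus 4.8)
The plan is to reduce everything to part (a), since (b) follows at once by taking the $g$-trace: because $dP$ is fiberwise linear it commutes with $\mathrm{tr}_g$, so $\mathrm{tr}_g\big(dP(\nabla du)\big) = dP(\mathrm{tr}_g \nabla du) = dP(\Delta u)$, while the term $\mathrm{tr}_g(u^*\nabla dP)$ is simply the trace of the second summand in (a). Thus it suffices to establish the tensorial identity (a), and since both sides are tensorial in the arguments it is enough to evaluate on a pair of vector fields $X,Y$ on $M$.

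To prove (a), I would unwind the definition of the second fundamental form. Writing $\nabla^u := u^*\nabla^N$ for the pullback connection on $u^*TN$ and $\nabla^{P\circ u} := (P\circ u)^*\nabla^B$ for that on $(P\circ u)^*TB$, recall that $\nabla du(X,Y) = \nabla^u_X(du(Y)) - du(\nabla^M_X Y)$, with the analogous expression for $P\circ u$. Since $d(P\circ u)(Y) = dP(du(Y))$, the left-hand side becomes
\[
\nabla d(P\circ u)(X,Y) = \nabla^{P\circ u}_X\big(dP(du(Y))\big) - dP\big(du(\nabla^M_X Y)\big).
\]
The crux is to differentiate the section $dP(du(Y))$ of $(P\circ u)^*TB$ by the Leibniz rule. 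Regarding $dP$, pulled back along $u$, as a bundle homomorphism $u^*TN \to (P\circ u)^*TB$ — that is, a section of $u^*T^*N \otimes (P\circ u)^*TB$ with the induced connection — the product rule splits the derivative into a term hitting $dP$ and a term hitting $du(Y)$:
\[
\nabla^{P\circ u}_X\big(dP(du(Y))\big) = \big(\nabla_X(u^*dP)\big)(du(Y)) + dP\big(\nabla^u_X(du(Y))\big).
\]

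The key step, and the one demanding the most care, is the chain rule for the pullback connection applied to the first term. For any section $s$ of a bundle over $N$ one has $\nabla_X(u^*s) = u^*(\nabla_{du(X)} s)$; applying this to $s = dP$ and noting that $\nabla_{du(X)} dP$ is exactly $\nabla dP$ with its first slot filled by $du(X)$, one gets
\[
\big(\nabla_X(u^*dP)\big)(du(Y)) = \nabla dP\big(du(X), du(Y)\big) = (u^*\nabla dP)(X,Y).
\]
Substituting this back and grouping the two $dP(\cdots)$ contributions, the remaining piece is $dP\big(\nabla^u_X(du(Y)) - du(\nabla^M_X Y)\big) = dP(\nabla du(X,Y))$, which together with the $u^*\nabla dP$ term yields precisely (a). The main obstacle is essentially bookkeeping: correctly keeping track of the several pullback connections in play — on $u^*TN$, on $(P\circ u)^*TB$, and on $u^*T^*N \otimes (P\circ u)^*TB$ — and verifying that the Leibniz rule and the chain rule $\nabla_X(u^*s) = u^*(\nabla_{du(X)} s)$ for them interact exactly as claimed. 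Once these identifications are in place the computation is forced, and tensoriality in $X,Y$ guarantees that evaluating on vector fields entails no loss of generality.
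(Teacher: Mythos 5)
Your argument is correct and is exactly the standard derivation: expand $\nabla d(P\circ u)$ via the defining formula for the second fundamental form, apply the Leibniz rule to the section $u^*dP$ of $u^*T^*N\otimes(P\circ u)^*TB$, use the chain rule $\nabla_X(u^*s)=u^*(\nabla_{du(X)}s)$ for the pullback connection, and trace to get (b). The paper does not prove this statement itself but cites it as Proposition 2.20 of Eells--Lemaire, and your proof matches the argument found there.
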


\paragraph{Acknowledgement} G. Gentile would like to thank Luciano Mari and Stefano Pigola for helpful discussions. This article is part of the second author's PhD thesis.

\section{Flexible notions of convexity}
\subsection{First properties of pullback convexity}
Convexity has been a crucial concept when regarding the non-existence results of harmonic maps. In this section, we will generalise the Hessian characterisation of convexity to a larger class of functions. 
\paragraph{Pullback convexity} The notion of pullback convexity, for short also to be called pullvexity, can be regarded as a pulled-back version of convexity, as the name suggests, and hence the etymology of our abbreviation pullvex consists of the blending of the words pullback and convex. The origin of pullback convexity arose while investigating the crucial role that is played by the composition $f \circ u$ of a harmonic map $u:M \rightarrow N$ with a function $f:N \rightarrow \R$ being subharmonic. Indeed, by means of the composition formula, one gets
$$
\Delta_M(f\circ u) = df(\Delta u) + \mathrm{tr}_{g}(u^* \nabla df) = \mathrm{tr}_{g}(u^* \nabla df) \geq 0.
$$

This observation motivates the following definition.

\begin{defn}\label{def:pullvexity}
Let $\mathcal{F} = \{u_{\alpha}:(M_\alpha,g_\alpha) \rightarrow N\}_{\alpha\in I}$ be a family of smooth maps from (possibly different) Riemannian manifolds $(M_\alpha,g_\alpha)$ into $N$. A smooth function $f:N \rightarrow \R$ is said to be \textbf{pullback convex}, or simply \textbf{pullvex}, with respect to the family of smooth maps $\mathcal{F}$ if
$$
\mathrm{tr}_{g_\alpha}(u_\alpha^* \mathrm{Hess}\, f) \geq 0
$$
for all $\alpha \in I$. Replacing $\geq 0$ by $> 0$ or $\geq C>0$ defines the notions of \textbf{strictly pullvex} and \textbf{strongly pullvex} functions, respectively. If the family $\mathcal{F}$ only consists of one map $u$, then $f$ is referred to be pullvex with respect to $u$.
\end{defn}

\paragraph{Singular values and pullback convexity} 
\begin{figure}[h]
    \centering
    \begin{tikzpicture}[scale=1.5]     
        \draw[->] (-1,0) -- (0,0) node[right] {$v_1$};
        \draw[->] (-1,0) -- (-1,1) node[above] {$v_2$}; 
        \draw[dashed] (-1,0) ellipse (1cm and 1cm);

        \draw[->, thick] (1,0) -- (2,0) node[above left] {$D_pu$};
        
        \draw[->] (4,0) -- (5.05,1.05) node[right] {$\lambda_1w_1$}; 
        \draw[->] (4,0) -- (3.5,0.5) node[above left] {$\lambda_2w_2$};
         \draw[dashed,rotate around={-45:(4,0)}] (4,0) ellipse (0.75cm and 1.5cm);
    \end{tikzpicture}
    \caption{Principal axis and singular values}
    \label{fig:singularvalues}
\end{figure}
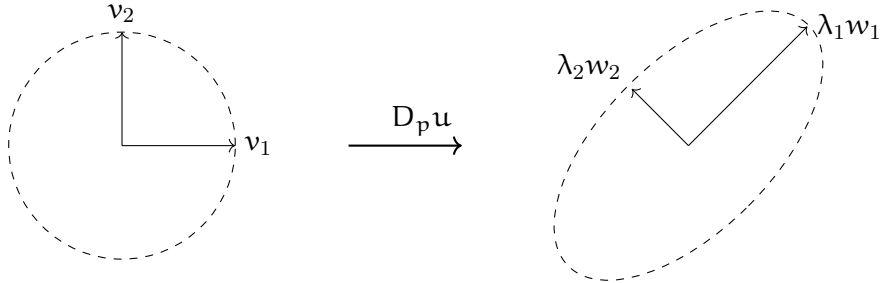

Let $u:(M,g) \rightarrow (N,h)$ be a smooth map between Riemannian manifolds. By using both metrics, we can consider the composition
$$
TM \stackrel{Du}{\longrightarrow} TN \stackrel{\flat_h}{\longrightarrow} T^*N \stackrel{{}^tDu}{\longrightarrow} T^*M \stackrel{\sharp_g}{\longrightarrow} TM
$$
with $\flat_h$ and $\sharp_g$ referring to the usual musical isomorphisms and $(Du)^* = \sharp_g\circ {}^tDu\circ \flat_h$ being the adjoint of $Du$. The eigenvalues of $(Du)^*\circ Du$ are non-negative and denoted by $\lambda_1^2,\dots,\lambda_k^2$ with $k = \min\{\dim M, \dim N\}$. The non-negative square roots $\lambda_1,\dots,\lambda_k$ are referred to as \textbf{singular values}. At each point $p\in M$ there are orthonormal vectors $v_1,\dots,v_m$ of $T_pM$ and $w_1,\dots,w_n$ of $T_{u(p)}N$ such that
$$
Du(v_i) = \lambda_i w_i \text{ and } (Du)^*(w_i)= \lambda_i v_i
$$
for $i = 1,\dots,k$. Geometrically, singular values are nothing else but the lengths of the principal axis of the ellipse in the tangent space $T_{u(p)}N$ obtained by mapping the unit sphere in $T_pM$ via $D_pu$. See Figure \ref{fig:singularvalues}.

\begin{lemma}\label{lem:pullvexlocally}
Let $f:N \rightarrow \R$ be pullvex with respect to $u:M\rightarrow N$. Locally, a pullvex function is of the form $$\mathrm{tr}_g(u^*\mathrm{Hess}\, f) = \sum_{i=1}^k \lambda_i^2 \mathrm{Hess}\, f (w_i,w_i)$$ with the singular values $\lambda_i$ together with the singular vectors $v_i, w_i$. If $u:M \rightarrow N$ is locally constant in a region $U \subseteq M$ there are neither strictly nor strongly pullvex functions.
\end{lemma}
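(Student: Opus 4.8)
The plan is to establish the displayed identity by a pointwise computation at an arbitrary $p \in M$, and then to read off the statement about locally constant maps as an immediate consequence of that identity.

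First I would fix $p \in M$ and invoke the singular value decomposition recalled above: there exist $g$-orthonormal vectors $v_1,\dots,v_m$ spanning $T_pM$ and $h$-orthonormal vectors $w_1,\dots,w_n$ spanning $T_{u(p)}N$ with $D_pu(v_i) = \lambda_i w_i$ for $i = 1,\dots,k$, where $k = \min\{m,n\}$. When $m > k$ the remaining vectors $v_{k+1},\dots,v_m$ are eigenvectors of $(D_pu)^*\circ D_pu$ for the eigenvalue $0$, hence satisfy $|D_pu(v_i)|^2 = \langle (D_pu)^* D_pu\, v_i, v_i\rangle = 0$, so that $D_pu(v_i)=0$ for $i > k$. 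This kernel bookkeeping is the only point that needs care.

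Next, since $\{v_i\}_{i=1}^m$ is a $g$-orthonormal basis of $T_pM$, I would expand the trace using solely the definition of the pullback tensor, $(u^*\mathrm{Hess}\, f)(X,Y) = \mathrm{Hess}\, f(D_pu\, X,\, D_pu\, Y)$, to obtain
$$
\mathrm{tr}_g(u^*\mathrm{Hess}\, f)(p) = \sum_{i=1}^m (u^*\mathrm{Hess}\, f)(v_i,v_i) = \sum_{i=1}^m \mathrm{Hess}\, f(D_pu(v_i), D_pu(v_i)).
$$
Substituting $D_pu(v_i) = \lambda_i w_i$ for $i\le k$ and $D_pu(v_i)=0$ otherwise, the sum collapses to $\sum_{i=1}^k \lambda_i^2\,\mathrm{Hess}\, f(w_i,w_i)$, which is the asserted formula. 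The qualifier \emph{locally} simply records that the frames $v_i, w_i$ are chosen pointwise and need not be globally smooth where the singular values coincide.

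For the second statement, if $u$ is locally constant on $U$ then $D_pu = 0$ for every $p \in U$, so every singular value vanishes on $U$ and the formula forces $\mathrm{tr}_g(u^*\mathrm{Hess}\, f)\equiv 0$ on $U$, regardless of $f$. This contradicts the defining requirement $\mathrm{tr}_g(u^*\mathrm{Hess}\, f) > 0$ of a strictly pullvex function, and a fortiori the requirement $\mathrm{tr}_g(u^*\mathrm{Hess}\, f) \geq C > 0$ of a strongly pullvex one; hence no such function with respect to $u$ can exist. I do not anticipate a genuine obstacle here: the whole lemma reduces to the definition of the pullback trace together with the singular value decomposition, and the only subtlety is the correct treatment of the kernel directions $v_{k+1},\dots,v_m$, which contribute nothing and are precisely why the sum terminates at $k$ rather than at $m$.
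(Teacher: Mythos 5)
Your proof is correct and follows essentially the same route as the paper: expand the trace in the orthonormal frame $v_1,\dots,v_m$, substitute $du(v_i)=\lambda_i w_i$, and observe that vanishing singular values on $U$ kill any strict or strong pullvexity. The only difference is that you spell out the kernel bookkeeping for $i>k$, which the paper leaves implicit.
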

\begin{proof}
The first formula follows by
$$\mathrm{tr}_g(u^*\mathrm{Hess}\,f) = \sum_{i=1}^m \mathrm{Hess}\,f(du(v_i),du(v_i)) = \sum_{i=1}^k \lambda_i^2\mathrm{Hess}\,f(w_i,w_i).$$The second claim follows by the singular values of $u$ vanishing on $U$.
\end{proof}
The first statement of the previous lemma shows that the notion of pullvexity can be seen as a weighted (mean-)convexity, where the weights are provided by the singular values of the map $u$. In the more general setting where the family $\mathcal{F}$ consists of multiple functions, there is an inequality for each member of the family which needs to be satisfied by the pullvex function.

\subsection{Recovering notions of convexity}
Now, we will demonstrate how pullvexity incorporates several well-known definitions of convexity.

\paragraph{Modes of convexity}
As a reminder, we will recall some well-known definitions of convexity of Riemannian manifolds. The following notions of convexity are explored in the papers \cite{BishopOneill}, \cite{HarveyLawson} and \cite{GromovSign}.
\begin{defn}
    Let $(N,h)$ be a Riemannian manifold and $f:N \rightarrow \R$ a smooth function. Let $k=1,\dots, n =\dim N$.
    \begin{enumerate}
        \item $f$ is \textbf{$k$-mean convex} if for all $p \in N$ any $k$ eigenvalues of $\mathrm{Hess}_p\, f$ sum to a non-negative value.
        \item $f$ is \textbf{mean convex} if $f$ is $n$-mean convex.
        \item $f$ is \textbf{$k$-convex} if for all $p\in N$ there are $k$ non-negative eigenvalues of $\mathrm{Hess}_p\, f$.
        \item $f$ is \textbf{convex} if it is $n$-convex or equivalently $1$-mean convex.
    \end{enumerate}
    The adverbs \textbf{strongly} and \textbf{strictly} in front of the above convexity notions mean that the defining inequality $\geq0$ is replaced by $\geq C>0$ or $>0$ respectively. 
\end{defn}

An illustrative example is given by $f(x,y,z) = x^2 + y^2 - \alpha z^2$. It is always $2$-convex, for $\alpha \leq 0$ it is convex, for $\alpha \leq 2$ it is mean convex and for $\alpha \leq 1$ the function $f$ is $2$-mean convex. Interestingly, $f$ is always pullvex with respect to the map $u(x,y,z)=(x,y,\tfrac{1}{\alpha}z)$ for $\alpha\neq 0$. In Figure \ref{fig:exconvex} the level sets of $f(x,y,z)=1$ for $\alpha = -1,0,1$ are visualised.


\begin{figure}[ht]
    \centering
    \includegraphics{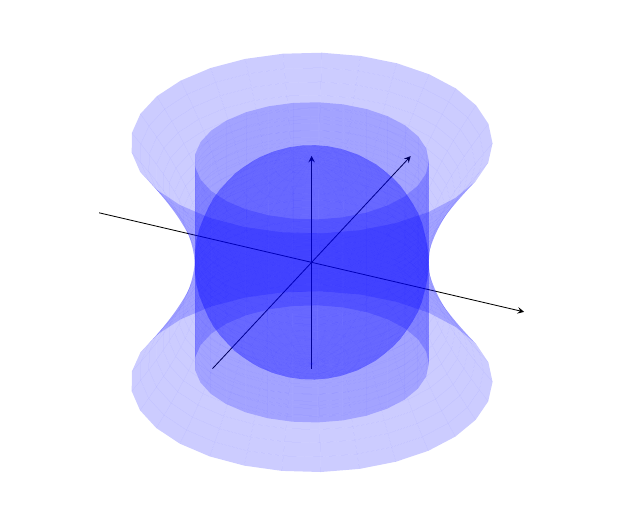}
    \caption{Convexities visualised}
    \label{fig:exconvex}
\end{figure}


\paragraph{Recovering convexity notions} Let $f:(N,h) \rightarrow \R$ be a smooth function.
\begin{enumerate}
    \item Let $\mathcal{F}$ be the set of all isometrically immersed $k$-dimensional manifolds $\iota:S \rightarrow N$. Then pullvexity of a function $u$ means that
    $$
    \mathrm{tr}_{g}(\iota^*\mathrm{Hess} \, f) = \sum_{i=1}^k \lambda_i^2 \mathrm{Hess}\,f(w_i,w_i) = \sum_{i=1}^k \mathrm{Hess}\,f(w_i,w_i)  \geq 0
    $$
    where $\lambda_i=1$ are the singular values of $\iota$. Thus, we obtain the $k$-mean convexity of $f$.
    \item Pullvexity with respect to the identity map $\mathrm{Id}:N \rightarrow N$ recovers the mean convexity $\Delta_N f = \mathrm{tr}_h(\mathrm{Id}^*\mathrm{Hess}\, f) \geq 0$. 
    \item  For a $k$-convex function $f$ take the family $\mathcal{F}$ of curves $\gamma:(-\varepsilon,\varepsilon)\rightarrow N$ with $\mathrm{Hess}\,f(\dot \gamma,\dot\gamma)>0$. Then pullvexity with respect to $\mathcal{F}$ is the same as the $k$-convexity of $f$. Contrary to the case of $k$-mean convexity, where the same family $\mathcal{F}$ can be used for any $k$-mean convex function, the case of $k$-convexity needs a separate family $\mathcal{F}$ for each function $f$.
    \item Let $\mathcal{F}$ be the collection of all immersed curves $\gamma:I \rightarrow N$ defined on an open interval $I$. Then pullvexity with respect to $\mathcal{F}$ means that
    $$
    \mathrm{tr}_g(\gamma^*\mathrm{Hess}\, f) \geq 0
    $$
    which is equivalent to requiring that every eigenvalue of $\mathrm{Hess}\, f$ is non-negative. Thus $f$ is pullvex with respect to $\mathcal{F}$ iff $f$ is convex.
\end{enumerate}

\paragraph{A remark on pullback convexity}

The pullvexity condition $\mathrm{tr}_gu^*\mathrm{Hess}\, f \geq 0$ seems to only investigate the mean convexity of the pulled back Hessian. Thankfully, this is not the case. Indeed, by taking the family $$\mathcal{F} = \big\{ u \circ \gamma \, \mid\, \gamma: ((-\varepsilon,\varepsilon),\gamma^*g) \rightarrow M \text{ smooth curve} \big\}$$ the pullvexity condition is $\mathrm{tr}_{\gamma^*g}((u \circ \gamma)^*\mathrm{Hess}\, f) = \mathrm{Hess}\, f(du(\dot \gamma),du(\dot \gamma))\geq 0$ and recovers the positive semidefiniteness of $u^*\mathrm{Hess}\, f$. Similarly, instead of all curves, we could also take any $k$-dimensional submanifold of $M$ obtaining the $k$-mean convexity of $u^*\mathrm{Hess}\, f$.

\begin{defn}
    Let $u:M \rightarrow N$ and $f:N \rightarrow \R$ be smooth maps. The function $f$ is \textbf{$k$-mean pullvex} with respect to $u$ if $\mathrm{tr}((u\circ \iota)^* \mathrm{Hess}\, f)\geq 0$ for all isometric immersion $\iota:S\rightarrow M$. The adverbs \textbf{strongly} and \textbf{strictly} are used as in the case of pullvexity.
\end{defn}
Clearly, the local condition for $k$-mean pullvexity is
$$
\sum_{i=1}^k \mathrm{Hess}\, f (du(e_i),du(e_i)) \geq 0
$$
for any orthonormal $k$-frame $e_1,\dots,e_k$ of $TM$.

\subsection{Hypersurfaces and pullback convexity}
Now we will investigate when a level set of a function should be regarded as pullvex. 
\paragraph{Pullvex hypersurfaces as level sets} As in the classical case of convex functions with their level sets being convex hypersurfaces, we define the analogous notion for pullvex functions. 
\begin{defn}\label{def:pullvexhypersurface}
    Let $S=f^{-1}(r)$ be a hypersurface defined as a level set of the smooth function $f:N \rightarrow \R$ for the regular value $r\in \R$. The hypersurface $S$ is \textbf{strongly pullvex} with respect to the map $u:M \rightarrow N$ if for each $p\in M$ with $u(p)\in S$ the pullvexity condition $$\mathrm{tr}_g(u^*\mathrm{Hess}\,f)_p \geq C >0$$ holds. We call $f^{-1}(-\infty,r)$ the \textbf{pullvex side} of $S$. $S$ is \textbf{strongly pullvex} with respect to the family of maps $\mathcal{F}$ if the function $f$ satisfies the pullvexity condition for every member $u$ of $\mathcal{F}$.
\end{defn}
Note, that in the definition of a pullvex hypersurface, the pullvexity condition is a priori only satisfied on $u(M)\cap S$ but also possesses information about the normal directions of $TS\subseteq TN$. Let $p \in M$ with $u(p)\in S$. By continuity of $\mathrm{tr}_g(u^*\mathrm{Hess}\,f)$, the pullvexity condition is still true for a geodesic ball $B_\varepsilon(p)\subseteq M$ with a possibly smaller constant $C\geq C'>0$. Of course, by taking a pullvex function $f$ and a regular value $r$ then $S = f^{-1}(r)$ defines a pullvex hypersurface. Moreover, a pullvex function also defines a foliation of $N$ by pullvex hypersurfaces (except for the singular values).

\paragraph{Pullvex hypersurfaces and their second fundamental forms}

Let $f:N \rightarrow \R$ be a smooth function and define for a regular value $r\in \R$ the hypersurface $S = f^{-1}(r)$. By definition $\mathrm{grad}\, f$ is a normal vector field along $S$. The second fundamental form is $A(X,Y) = h(\nabla_XY, N_f)$ for the unit normal field $N_f = \pm \frac{\mathrm{grad}\, f}{\|\mathrm{grad}\, f\|}$, where the sign $\pm$ depends on the choice of orientation. 

\begin{lemma}\label{lem:hypersurfacepullvex}
    Let $f:N \rightarrow \R$ be the defining function of the hypersurface $S = f^{-1}(r)$. Then the relation between the second fundamental form $A$ and the Hessian of $f$ is given by $$ A = \pm \frac{1}{\|\mathrm{grad}\, f\|}\iota^*\mathrm{Hess}\,f, $$ where $\iota: S \rightarrow N$ is the embedding and the sign $\pm$ depends on the choice of the unit normal $N_f$.
\end{lemma}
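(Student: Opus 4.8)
The plan is to evaluate both sides on an arbitrary pair of vector fields $X,Y$ tangent to $S$ and compare pointwise. The natural starting point is the coordinate-free identity $\mathrm{Hess}\, f(X,Y) = h(\nabla_X \mathrm{grad}\, f, Y)$, which follows directly from $\mathrm{Hess}\, f = \nabla df$ together with $df(\cdot) = h(\mathrm{grad}\, f, \cdot)$ and the metric compatibility of $\nabla^N$. Since $\iota^*\mathrm{Hess}\, f(X,Y) = \mathrm{Hess}\, f(\iota_* X, \iota_* Y)$ for $X,Y \in TS$ and $\mathrm{Hess}\, f$ is tensorial, it suffices to compute $\mathrm{Hess}\, f$ on vectors tangent to $S$, and I may freely extend $X,Y$ to vector fields without affecting the pointwise value.

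First I would write $\mathrm{grad}\, f = \pm \|\mathrm{grad}\, f\|\, N_f$ near $S$. This is legitimate because $r$ is a regular value, so $\mathrm{grad}\, f$ does not vanish in a neighbourhood of $S$; in particular $N_f = \pm \mathrm{grad}\, f / \|\mathrm{grad}\, f\|$ is a genuine locally defined unit field that can be differentiated. Applying $\nabla_X$ and the Leibniz rule gives $\nabla_X \mathrm{grad}\, f = \pm X(\|\mathrm{grad}\, f\|)\, N_f \pm \|\mathrm{grad}\, f\|\, \nabla_X N_f$. Pairing with a tangent field $Y$ and using $h(N_f, Y) = 0$ annihilates the first term, leaving $\mathrm{Hess}\, f(X,Y) = \pm \|\mathrm{grad}\, f\|\, h(\nabla_X N_f, Y)$.

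The final step is to recognise $h(\nabla_X N_f, Y)$ as (minus) the second fundamental form. Differentiating the identity $h(N_f, Y) = 0$ along $X$ and using metric compatibility yields $h(\nabla_X N_f, Y) = -h(N_f, \nabla_X Y) = -A(X,Y)$, with the convention $A(X,Y) = h(\nabla_X Y, N_f)$ fixed in the paragraph preceding the lemma. Substituting back gives $\iota^*\mathrm{Hess}\, f = \mp \|\mathrm{grad}\, f\|\, A$, i.e. $A = \pm \frac{1}{\|\mathrm{grad}\, f\|}\, \iota^*\mathrm{Hess}\, f$, the sign being absorbed into the choice of unit normal.

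I do not expect any genuine obstacle: this is a classical identity and the argument is essentially bookkeeping. The only points needing mild care are the tensoriality that permits extending $X,Y$ to fields, the non-vanishing of $\mathrm{grad}\, f$ near a regular level set so that $N_f$ is well defined and differentiable, and tracking the two independent sign conventions (orientation of $N_f$ and the sign in the definition of $A$) — precisely the ambiguity encoded by the $\pm$ in the statement.
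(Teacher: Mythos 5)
Your argument is correct and is essentially the same computation as the paper's: both differentiate the relation between $\mathrm{grad}\, f$ and the unit normal $N_f$, pair with a tangent field $Y$ so that the term involving the derivative of $\|\mathrm{grad}\, f\|$ vanishes by orthogonality, and identify $-h(\nabla_X N_f, Y)$ with $A(X,Y)$. If anything, your bookkeeping of the overall sign (arriving at $A = \mp\frac{1}{\|\mathrm{grad}\, f\|}\iota^*\mathrm{Hess}\, f$) is slightly more careful than the paper's final line, but the discrepancy is absorbed by the $\pm$ in the statement either way.
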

\begin{proof}
    Since $A(X,Y) = h(\nabla_XY, N_f) = -h (\nabla_XN_f,Y)$ by the orthogonality of tangent vectors $X,Y$ and the unit normal $N_f =\frac{\mathrm{grad}\, f}{\|\mathrm{grad}\, f\|}$, we need to compute
    $$
    \nabla_X N_f = \nabla_X\|\mathrm{grad}\, f\|^{-1} \mathrm{grad}\, f = - \frac{h(\nabla_X \mathrm{grad}\, f,N_f)}{\|\mathrm{grad}\, f\|} \, N_f  + \frac{\nabla_X\mathrm{grad}\, f}{\|\mathrm{grad}\, f\|}
    $$
    and hence
    $$
    A(X,Y) = \frac{1}{\|\mathrm{grad}\, f\|}\bigg( \mathrm{Hess}\, f (X,N_f)h(N_f,Y) - \mathrm{Hess}\, f(X,Y) \bigg) = \frac{1}{\|\mathrm{grad}\, f\|}\mathrm{Hess}\, f(X,Y).
    $$
\end{proof}

By Lemma \ref{lem:hypersurfacepullvex}, requiring conditions on the second fundamental form is a priori not enough for uniquely determining the kind of pullvexity of the hypersurface. For example, the unit sphere $S^2$ in $\R^3$ can be either defined as the level set of $x^2+y^2+z^2$ or of $\sqrt{x^2+y^2+z^2}$ for the regular value $1$. While the former is convex, the latter is $2$-convex. We will see later, that this distinction allows us to prove slightly different results.

\section{The maximum principle and its applications}

\subsection{The maximum principle}

The first goal of this section is establishing a maximum principle for harmonic maps, with pullvexity and the Omori-Yau maximum principle as the major ingredients. As a first step, we will recall the Omori-Yau maximum principle.

\paragraph{Maximum principles on non-compact manifolds}

The definition of the Omori-Yau maximum principle takes the conclusions of the maximum principles of \cite{Omori} and \cite{Yau} as a definition for a Riemannian manifold.

\begin{defn}\label{def:OY}
    A Riemannian manifold $(M,g)$ is said to satisfy the \textbf{strong Omori-Yau maximum principle} if for any function $f\in \mathscr{C}^2(M)$ bounded from above there is a \textbf{maximising sequence} $\{p_k\}$ satisfying
    \begin{enumerate}
        \item $f(p_k) > \sup f - 1/k$,
        \item $\|\mathrm{grad}_g f(p_k)\| < 1/k$ and
        \item $\Delta_M f(p_k) < 1/k$.
    \end{enumerate}
    If one does not require (b), then one refers to the \textbf{weak Omori-Yau maximum principle}.
\end{defn}
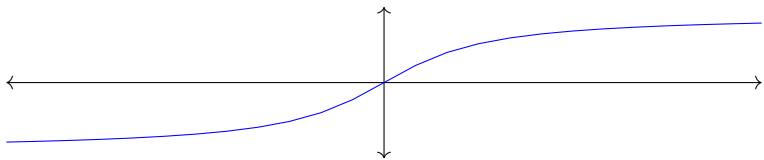
\begin{figure}[h]
    \centering
 \begin{tikzpicture}[domain=-5:5]
 \draw[<->] (-5,0) -- (5,0);
 \draw[<->] (0,-1) -- (0,1);
  \draw[color=blue]   plot (\x, {0.01*atan(\x)}) ;
\end{tikzpicture}
    \caption{Graph of $\arctan(x)$}
    \label{fig:enter-label}
\end{figure}
The conditions of the Omori-Yau maximum principle can be visualised by the example of the graph of $\arctan(x)$. The supremum of the function is $\pi/2$ and by taking any maximising sequence (e.g. $p_k = k$) we notice that the conditions (a), (b) and (c) are satisfied.

\begin{remark}\label{rmk:OYobservation}
    An observation that will be used later is the following perturbation argument. Let $(M,g)$ be a manifold satisfying the strong Omori-Yau maximum principle. Then for any bounded $\mathscr{C}^2(M)$ function $f$, there is a maximising sequence $p_k$ in $M$. A first observation is that we can consider a subsequence of $p_k$. This allows us to disregard points of $M$ on which $f$ has undesirable properties. A second observation is given by the continuity of $f$, $\|\mathrm{grad}_g\,f\|$ and $\Delta f$.  Around every $p_k$, we can choose a geodesic ball of radius $0<\varepsilon_k<1/k$ such that any sequence $q_k$, satisfying $q_k \in B_{\varepsilon_k}(p_k)$, is also a maximising sequence. Hence, as long as non-desirable properties of $f$ do not happen on open sets, they can be ignored.
\end{remark}

\medskip

Some classes of manifolds satisfying the Omori-Yau maximum principle are described in the following theorem.
\begin{thm}[Omori \cite{Omori}, Yau \cite{Yau}, Grigori'yan \cite{Grigoryan}]\label{thm:OmoriYauGrigoriyan}
    Let $(M,g)$ be a complete Riemannian manifold.
    \begin{enumerate}
        \item If the sectional curvature $K_M$ is bounded from below by a uniform constant, then $M$ satisfies the strong Omori-Yau maximum principle. \footnote{Omori actually proved the strong Hessian maximum principle which implies the strong Omori-Yau maximum principle.}
        \item If the Ricci curvature $\mathrm{Ric}_M$ is bounded from below by a uniform constant, then $M$ satisfies the strong Omori-Yau maximum principle.
        \item If the volume growth of geodesic balls is bounded by $\exp(r^2)$ then $M$ satisfies the weak Omori-Yau maximum principle.
    \end{enumerate}
\end{thm}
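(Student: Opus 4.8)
The three assertions are classical, so the plan is to recall the standard proofs rather than invent new ones; parts (a) and (b) share a single perturbation scheme, while part (c) passes through stochastic completeness. For (a) and (b), fix $f \in \mathscr{C}^2(M)$ bounded from above with $f^\ast := \sup f$, choose a reference point $o \in M$, and write $r := d(o,\cdot)$ for the distance function, which is proper since $M$ is complete. The core idea is to introduce a proper exhaustion $\gamma := \sqrt{1+r^2}$, which tends to $+\infty$ along every divergent sequence, and to study the perturbed functions $f_\varepsilon := f - \varepsilon\gamma$ for $\varepsilon > 0$. Since $f$ is bounded above and $\gamma \to \infty$, each $f_\varepsilon$ attains an interior maximum at some $p_\varepsilon \in M$; letting $\varepsilon = 1/k \to 0$ will produce the desired maximising sequence.

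At the maximum point $p_\varepsilon$ one reads off the first- and second-order conditions $\mathrm{grad}\, f(p_\varepsilon) = \varepsilon\,\mathrm{grad}\,\gamma(p_\varepsilon)$ and $\mathrm{Hess}\, f(p_\varepsilon) \leq \varepsilon\,\mathrm{Hess}\,\gamma(p_\varepsilon)$. The first identity immediately yields condition (b) of Definition \ref{def:OY}, because $\|\mathrm{grad}\,\gamma\| = |\gamma'(r)|\,\|\mathrm{grad}\, r\| \leq 1$ wherever $r$ is smooth, a bound that is essentially free and uses no curvature. Tracing the second inequality gives $\Delta f(p_\varepsilon) \leq \varepsilon\,\Delta\gamma(p_\varepsilon)$, so condition (c) follows once $\Delta\gamma$ is bounded above; and a routine comparison of $f(p_\varepsilon)$ against $f$ evaluated along a sequence approaching $f^\ast$ yields condition (a), namely $f(p_\varepsilon) \to f^\ast$. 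The only genuine analytic input is therefore an upper bound on $\Delta\gamma$ (for the strong Omori-Yau principle) or on $\mathrm{Hess}\,\gamma$ (for Omori's stronger Hessian version noted in the footnote).

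This is exactly where the curvature hypotheses enter and where the two cases diverge. Under a uniform lower sectional bound (a), the Hessian comparison theorem controls the full $\mathrm{Hess}\, r$, hence $\mathrm{Hess}\,\gamma$, which gives Omori's Hessian principle and a fortiori the strong Omori-Yau principle; under only a uniform lower Ricci bound (b), the Laplacian comparison theorem bounds $\Delta r$ from above, which suffices for the Laplacian condition (c) and thus for the strong Omori-Yau principle. The main obstacle in making this rigorous is that $r$ fails to be smooth on the cut locus of $o$, so a priori $p_\varepsilon$ may be a singular point of $\gamma$. I would resolve this with Calabi's trick: near $p_\varepsilon$ replace $\gamma$ by a smooth upper support function built from the distance to a point pushed slightly along a minimising geodesic, for which the comparison estimates hold in the barrier sense and still transfer the required inequalities to $f$.

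For (c) I would not use a perturbation argument but instead route through two established results. First, Grigori'yan's volume test asserts that $M$ is stochastically complete whenever $\int^{\infty} r\,\big(\log \mathrm{Vol}(B_r)\big)^{-1}\,dr = \infty$; the hypothesis $\mathrm{Vol}(B_r) \leq \exp(r^2)$ gives $\log \mathrm{Vol}(B_r) \leq r^2$, so the integrand dominates $1/r$ and the integral diverges, establishing stochastic completeness. Second, by the Pigola-Rigoli-Setti equivalence between stochastic completeness and the weak Omori-Yau maximum principle recalled in the introduction, this stochastic completeness is precisely the statement that $M$ satisfies the weak form, which completes the proof.
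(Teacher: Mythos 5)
The paper does not prove this theorem at all: it is stated as background and attributed to Omori, Yau and Grigori'yan, with the Pigola--Rigoli--Setti equivalence invoked separately in the discussion of stochastic completeness. So there is no in-paper argument to compare against; what you have written is a reconstruction of the classical proofs, and it is essentially sound. Your treatment of (a) and (b) is the standard modern scheme: perturb by $\varepsilon\sqrt{1+r^2}$, read off the first- and second-order conditions at the interior maximum, and feed in Hessian comparison (sectional bound) or Laplacian comparison (Ricci bound), with Calabi's upper-barrier trick handling the cut locus. Two details worth making explicit if you expand this: first, the Laplacian comparison bound $\Delta r\leq (m-1)\sqrt{K}\coth(\sqrt{K}r)$ blows up as $r\to 0$, and it is precisely the factor $\gamma'(r)=r/\sqrt{1+r^2}$ that tames it so that $\Delta\gamma$ is bounded by a constant independent of the maximum point; second, conditions (a), (b), (c) of Definition \ref{def:OY} must hold at a single point for each $k$, so you need the diagonal choice of $\varepsilon_k$ (pick $q_k$ with $f(q_k)>\sup f-\tfrac{1}{2k}$ first, then $\varepsilon_k$ small relative to $\gamma(q_k)$) rather than three separate limits. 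Your part (c) via Grigori'yan's volume test $\int^{\infty} r\left(\log\mathrm{Vol}(B_r)\right)^{-1}dr=\infty$ followed by the Pigola--Rigoli--Setti equivalence is exactly the route the paper's surrounding text suggests, and the computation $r/\log\mathrm{Vol}(B_r)\geq 1/r$ is correct.
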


On compact manifolds, the classical maximum principle holds, hence are a basic class of examples of manifolds satisfying the strong Omori-Yau maximum principle. By Theorem \ref{thm:OmoriYauGrigoriyan} we can construct non-compact examples like the Euclidean space $\R^n$, the hyperbolic space $\mathbb{H}^n$, and manifolds with subexponential volume growth like an asymptotically-locally Euclidean (ALE) four manifold. A classical example of a manifold satisfying the weak Omori-Yau maximum principle but not the strong one is $\R^2\setminus (0,0)$.
\medskip

We would like to point out that in the literature a uniform lower bound on the Ricci curvature is a popular assumption, and by the results cited above, those are examples of manifolds satisfying the strong Omori-Yau maximum principle.

\paragraph{Stochastic completeness}

As a side note, the weak Omori-Yau maximum principle is related to the infinite lifetime of the Brownian motion on the manifold. More precisely, let $\rho_t(p,q)$ be the minimal heat kernel\footnote{For more details see \cite{Hsu}.}, i.e. the fundamental solution to $L = \partial_t - \frac{1}{2}\Delta_M$ with $\Delta_M$ being the Laplace-Beltrami operator on $M$. Then $M$ is \textbf{stochastically complete} if $\int _M\rho_t(x,y)\, dy = 1$. In the influential article \cite{PigolaRigoliSetti}, building on \cite{Grigoryan}, the stochastic completeness of the manifold $(M,g)$ is shown to be equivalent to the validity of the weak Omori-Yau maximum principle. Hence, we will use the stochastic completeness of a manifold as a synonym for the validity of the weak Omori-Yau maximum principle. For more conditions implying stochastic completeness, consult the books \cite{maxprinciples} and \cite{maxprinciplealiasrigoli}.

\paragraph{An Omori-Yau maximum principle for harmonic maps}
The idea of this maximum principle is rather simple. Consider the Sampson maximum principle (cf. \cite{sampson}) and use the Omori-Yau maximum principle rather than the classical maximum principle for the contradiction in the proof of Theorem 2 in \cite{sampson}.

\begin{thm}[Omori-Yau for harmonic maps]\label{thm:HOY}
Let $(M,g)$ be stochastically complete and $u:(M,g) \rightarrow (N,h)$ be a harmonic map.
\begin{enumerate}
    \item For any strongly pullvex function $f:N \rightarrow \R$ the composition $f\circ u$ is subharmonic on $M$ and unbounded from above.
    \item Moreover, if there is a $r < \sup(f \circ u)$, then the pullvexity only needs to hold on $f^{-1}[r,\infty)$.
\end{enumerate}
\end{thm}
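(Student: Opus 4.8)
The plan is to follow Sampson's strategy, replacing the classical maximum principle on compact manifolds by the weak Omori-Yau maximum principle that is guaranteed by stochastic completeness. The engine is the second of the composition formulas in Proposition \ref{prop:compositionformula} combined with harmonicity: since $\Delta u = 0$, applying that formula with $P = f$ gives
$$
\Delta_M(f \circ u) = df(\Delta u) + \mathrm{tr}_g(u^*\nabla df) = \mathrm{tr}_g(u^*\mathrm{Hess}\, f).
$$
Strong pullvexity of $f$ with respect to $u$ says precisely that the right-hand side is bounded below by a constant $C > 0$, so $\Delta_M(f \circ u) \geq C > 0$ pointwise. In particular $f \circ u$ is (strictly) subharmonic, which settles the first assertion of (a).

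For the unboundedness in (a), I would argue by contradiction. Suppose $f \circ u$ is bounded from above. As $u$ and $f$ are smooth, $f \circ u \in \mathscr{C}^2(M)$, so stochastic completeness, equivalently the weak Omori-Yau maximum principle of Definition \ref{def:OY}, supplies a maximising sequence $\{p_k\}$ with $\Delta_M(f \circ u)(p_k) < 1/k$. Combining this with the lower bound above yields $C \leq \Delta_M(f\circ u)(p_k) < 1/k$ for every $k$, which is impossible as soon as $k > 1/C$. Hence $f \circ u$ must be unbounded from above.

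For part (b) the only extra ingredient is that the maximising sequence is eventually forced into the region where pullvexity is assumed. Suppose again for contradiction that $s := \sup(f \circ u) < \infty$; by hypothesis $r < s$. The weak Omori-Yau principle produces $\{p_k\}$ with $(f\circ u)(p_k) > s - 1/k$ and $\Delta_M(f\circ u)(p_k) < 1/k$. Choosing $k$ large enough that $1/k < s - r$ forces $(f\circ u)(p_k) > r$, i.e. $u(p_k) \in f^{-1}[r,\infty)$, which is exactly the set on which $\mathrm{tr}_g(u^*\mathrm{Hess}\, f) \geq C > 0$ is now assumed. The same chain $C \leq \Delta_M(f\circ u)(p_k) < 1/k$ then closes the argument.

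There is no serious analytic obstacle here; the content lies entirely in the bookkeeping. The step requiring care, and the conceptual heart of (b), is verifying that the maximising sequence enters $f^{-1}[r,\infty)$: this rests on $r$ being strictly below the (finite, under the contradiction hypothesis) supremum, so that the gap $s - r > 0$ can absorb the $1/k$ error for large $k$. It is also worth emphasising that only the weak form of the principle (condition (c), with no gradient control) is used, which is why stochastic completeness alone suffices.
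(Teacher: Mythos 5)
Your argument is correct and coincides with the paper's own proof: both derive $\Delta_M(f\circ u)=\mathrm{tr}_g(u^*\mathrm{Hess}\,f)\geq C>0$ from the composition formula and harmonicity, then contradict the weak Omori-Yau bound $\Delta_M(f\circ u)(p_k)<1/k$ along a maximising sequence. Your treatment of (b) simply spells out the paper's one-line remark that the maximising sequence must enter $f^{-1}[r,\infty)$, which is a welcome clarification but not a different route.
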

\begin{proof}
Suppose $f \circ u$ would be bounded from above by the constant $s = \sup (f \circ u)$. Then the stochastic completeness of $M$ would infer the existence of a  sequence $\{p_k\}_k$ in $M$ such that $(f\circ u)(p_k)> s - 1/k $ and $\Delta_M(f\circ u)(p_k)<1/k$. By the composition formula
$$
\Delta_M(f\circ u) = df(\Delta u) + \mathrm{tr}_g(u^*\mathrm{Hess}\, f)= \mathrm{tr}_g(u^*\mathrm{Hess}\, f)\geq C >0
$$
with $C$ being the constant of strong pullvexity of $f$, we obtain the subharmonicity of $f \circ u$. 

Moreover, combining the two previous observations leads to
$$
0 < C \leq \Delta_M(f\circ u)(p_k)<1/k
$$
and hence to a contradiction. Thus, $f \circ u$ cannot be bounded and (a) holds. For (b) we only need to observe that any maximising sequence $\{p_k\}$ needs to enter $f^{-1}[r,\infty)$.
\end{proof}
Statement (a) should be treated as a global maximum principle and (b) as a localised version in the presence of additional information known about the image of the map $u$. 

\paragraph{Recovering Sampson's maximum principle}

Since the proof of Theorem \ref{thm:HOY} is inspired by the proof of Sampson's maximum principle a natural question would be if the original theorem can be recovered.

\begin{prop}[Pullvex Sampson maximum principle]\label{prop:pullvex_sampson}
    Let $u:M \rightarrow N$ be a non-constant harmonic map and $S\subseteq N$ a strongly pullvex hypersurface with respect to $u$. Assume that there is a touching point, i.e.  $p \in M$ such that $u(p)\in S$. Then the image of $u$ cannot lie on the pullvex side of $S$.
\end{prop}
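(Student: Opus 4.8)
The plan is to argue by contradiction using the classical interior maximum principle, exactly in the spirit of Sampson's original argument, with strong pullvexity playing the role that convexity plays classically. Write $S = f^{-1}(r)$ for the defining function $f:N\to\R$ and its regular value $r$, so that by Definition \ref{def:pullvexhypersurface} the pullvex side is $f^{-1}(-\infty,r)$. I would suppose, towards a contradiction, that the image $u(M)$ is contained in the closed pullvex side $f^{-1}(-\infty,r]$, while the touching point $p\in M$ satisfies $u(p)\in S$, i.e. $(f\circ u)(p)=r$. The whole point is that this configuration makes $p$ a genuine interior extremum of $f\circ u$, not merely a point of a maximising sequence at infinity.

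First I would observe that under this assumption the composite $f\circ u:M\to\R$ is $\mathscr{C}^2$, satisfies $f\circ u\le r$ everywhere, and attains the value $r$ at $p$. Hence $p$ is a global maximum of $f\circ u$; since $M$ has no boundary it is an interior maximum, and the classical second-derivative test forces $\mathrm{Hess}(f\circ u)_p\le 0$, and therefore $\Delta_M(f\circ u)(p)=\mathrm{tr}_g\,\mathrm{Hess}(f\circ u)_p\le 0$.

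Next I would feed $p$ into the composition formula of Proposition \ref{prop:compositionformula} together with the harmonicity $\Delta u=0$, obtaining
$$\Delta_M(f\circ u)(p)=df(\Delta u)(p)+\mathrm{tr}_g(u^*\mathrm{Hess}\,f)(p)=\mathrm{tr}_g(u^*\mathrm{Hess}\,f)(p).$$
Since $u(p)\in S$ and $S$ is strongly pullvex with respect to $u$, Definition \ref{def:pullvexhypersurface} yields $\mathrm{tr}_g(u^*\mathrm{Hess}\,f)(p)\ge C>0$. Combining the two computations gives $0\ge \Delta_M(f\circ u)(p)\ge C>0$, the desired contradiction, so $u(M)$ cannot lie on the (closed) pullvex side.

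Finally I would record what drives the argument and where the only real subtlety sits. In contrast to Theorem \ref{thm:HOY}, no stochastic completeness or Omori-Yau input is needed: the touching point supplies an honest interior extremum, so the classical maximum principle applies on any boundaryless $M$, and this is exactly the sense in which the proposition ``recovers'' Sampson's result. Moreover, because strong (rather than plain) pullvexity gives the strict bound $\ge C>0$, the weak maximum principle alone suffices and one never needs Hopf's strong maximum principle, which keeps the proof short. The non-constancy hypothesis is consistent with, and in fact forced by, strong pullvexity, since by Lemma \ref{lem:pullvexlocally} no strongly pullvex function exists with respect to a locally constant map; in particular the degenerate case $u(M)\subseteq S$ is excluded automatically by the very same computation.
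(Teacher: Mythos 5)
Your proof is correct, and it takes a slightly but genuinely different route from the paper's. The paper argues via Theorem \ref{thm:HOY}: it notes that the maximum of $f\circ u$ is attained at the touching point $p$, that the pullvexity bound persists on a neighbourhood of $p$, and that a maximising sequence can be taken to approach $p$, whence the Omori--Yau condition $\Delta_M(f\circ u)(p_k)<1/k$ contradicts $\Delta_M(f\circ u)\geq C>0$. You instead observe that the touching point is an honest interior global maximum and invoke only the classical second-derivative test, $\Delta_M(f\circ u)(p)\le 0$, against the same pullvexity computation. The core identity $\Delta_M(f\circ u)=\mathrm{tr}_g(u^*\mathrm{Hess}\,f)\ge C$ is identical in both arguments; what your version buys is that it needs no stochastic completeness of $M$ at all, which is worth noting since that hypothesis does not appear in the statement of the proposition (the paper's appeal to Theorem \ref{thm:HOY} formally presupposes it, and its claim that \emph{a} maximising sequence ``will eventually approach $p$'' is looser than necessary, since Omori--Yau only guarantees the existence of \emph{some} good maximising sequence rather than a property of all of them). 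Your closing remarks --- that strong pullvexity already rules out locally constant $u$ via Lemma \ref{lem:pullvexlocally}, and that the degenerate case $u(M)\subseteq S$ falls to the same computation --- are accurate and tie the non-constancy hypothesis to the rest of the framework. The only interpretive step, reading ``lies on the pullvex side'' as the closed sublevel set $f^{-1}(-\infty,r]$, matches the paper's evident intent.
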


\begin{proof}
    Let $f:N \rightarrow \R$ be such that $f^{-1}(0)=S$. If $u$ was on the pullvex side of $S$ then $\max (f \circ u) = 0$ which is attained at $p\in M$. As already remarked, the pullvexity condition at $u(p)\in S$ is also valid in a small neighbourhood $U$ of $p\in M$. Thus, a maximising sequence $\{p_k\}$ will eventually approach $p$. By applying Theorem \ref{thm:HOY} we obtain a contradiction.
\end{proof}

The previous proposition also shows that taking a pullvex function $f:N \rightarrow \R$ and defining the leaves $\mathcal{L}_r:= f^{-1}(r)$ for all $r \in \R$ produces a foliation of $N$. Hence, Theorem \ref{thm:HOY} is an analogue of the foliated maximum principle introduced in \cite{assimosJost} and generalised in \cite{ABG}.

\paragraph{Consequences of boundedness}
A first obstruction for the applicability of the Omori-Yau maximum principle for harmonic maps appears whenever boundedness is present. 
\begin{prop}
Let $u:M\rightarrow N$ be a harmonic map. If one of the following properties
\begin{enumerate}
    \item $M$ is compact,
    \item $M$ is stochastically complete, $N$ complete, and $f(M)$ bounded in $N$ 
    \item $M$ is stochastically complete and $N$ is compact
\end{enumerate}
holds, then there is no strongly pullvex function (with respect to $u$) on $N$.
\end{prop}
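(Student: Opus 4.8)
The plan is to argue by contradiction, treating Theorem \ref{thm:HOY} as a black box: in each of the three cases I would show that $M$ is stochastically complete while $f \circ u$ is simultaneously forced to be bounded from above, which directly contradicts the unboundedness asserted in part (a) of that theorem. So I would begin by supposing that some $f : N \to \R$ is strongly pullvex with respect to $u$, and then rule this out case by case.

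First I would dispose of case (a). A compact manifold satisfies the classical maximum principle and is therefore stochastically complete, as recorded after Theorem \ref{thm:OmoriYauGrigoriyan}; moreover any continuous function on a compact space is bounded, so $f \circ u$ is bounded from above, and Theorem \ref{thm:HOY}(a) yields a contradiction. One can in fact keep this case self-contained without invoking the full machinery: $f \circ u$ attains its maximum at some $p \in M$, where $\Delta_M(f \circ u)(p) \le 0$, whereas the composition formula of Proposition \ref{prop:compositionformula} together with harmonicity of $u$ gives $\Delta_M(f \circ u) = \mathrm{tr}_g(u^* \mathrm{Hess}\, f) \ge C > 0$ everywhere, which is already absurd.

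For cases (b) and (c) the stochastic completeness of $M$ is assumed outright, so it only remains to verify that $f \circ u$ is bounded from above. In case (c), $N$ is compact, hence $f$ is bounded on $N$ and so is $f \circ u$. In case (b) I would use the completeness of $N$: since $u(M)$ is bounded in $N$ (here I read the statement's $f(M)$ as $u(M)$, since $f$ is precisely the function whose existence is being refuted), its closure $\overline{u(M)}$ is closed and bounded, hence compact by the Hopf–Rinow theorem; restricting the continuous function $f$ to this compact set shows $f \circ u$ is bounded from above. In either situation Theorem \ref{thm:HOY}(a) again delivers the contradiction, so no strongly pullvex function with respect to $u$ can exist.

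I do not expect a genuine obstacle here, as the statement is a clean converse-flavoured corollary of Theorem \ref{thm:HOY}: each hypothesis is exactly a sufficient condition for the boundedness of $f \circ u$. The only step requiring a little care is case (b), where ``$u(M)$ bounded in $N$'' is not by itself enough to force relative compactness; it is the geodesic completeness of $N$, through Hopf–Rinow, that upgrades boundedness to precompactness of $\overline{u(M)}$ and thereby to the needed boundedness of $f \circ u$.
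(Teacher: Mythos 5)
Your proposal is correct and follows essentially the same route as the paper, whose proof is the one-line observation that in every case the existence of a strongly pullvex function would contradict the conclusion of Theorem \ref{thm:HOY}. You simply spell out the details the paper leaves implicit (stochastic completeness of compact manifolds, Hopf--Rinow in case (b)), which is a faithful elaboration rather than a different argument.
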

\begin{proof}
In every case, the existence of a strongly pullvex function would contradict the conclusion of Theorem \ref{thm:HOY}.
\end{proof}
An immediate consequence of the previous proposition is the following corollary.
\begin{cor}
    Let $(N,h)$ be a Riemannian manifold admitting a compact minimal submanifold $u: M\rightarrow N$, or more specifically, a closed geodesic $u:S^1 \rightarrow N$. Then $N$ does not admit any globally defined strongly pullvex function with respect to $u$.
\end{cor}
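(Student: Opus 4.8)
The plan is to reduce the statement directly to the Proposition immediately preceding it, by recognizing the two given maps as harmonic maps with compact domain. First I would recall, as noted in the preliminaries of the excerpt, that both minimal (isometric) immersions and geodesics are standard examples of harmonic maps (see \cite{eellslemaire}). Consequently a compact minimal submanifold $u:M\rightarrow N$ is exactly a harmonic map whose domain $M$ is compact, and a closed geodesic $u:S^1\rightarrow N$ is merely the special case $M=S^1$, which is again compact. This identification is the only genuine observation the proof requires.

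With this in hand the conclusion is immediate: case (a) of the preceding Proposition applies verbatim, since $M$ is compact and $u$ is harmonic, and it yields the non-existence of any globally defined strongly pullvex function on $N$ with respect to $u$. The closed geodesic case needs no separate treatment, being subsumed under $M=S^1$.

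For the reader's convenience I would also unwind, in one line, why case (a) holds, tracing it back to Theorem \ref{thm:HOY}: a compact manifold satisfies the classical maximum principle and is in particular stochastically complete, so if a strongly pullvex $f$ existed, then Theorem \ref{thm:HOY}(a) would force $f\circ u$ to be unbounded from above, contradicting the fact that the continuous function $f\circ u$ attains its maximum on the compact manifold $M$. I do not anticipate any real obstacle here, as all the substantive content lives in the preceding Proposition and in Theorem \ref{thm:HOY}; the sole point deserving explicit mention is the standard fact that minimal submanifolds and closed geodesics are harmonic, which places them within the hypotheses of case (a).
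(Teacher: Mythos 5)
Your proposal is correct and follows exactly the route the paper intends: the paper simply declares the corollary ``an immediate consequence of the previous proposition,'' and your identification of minimal submanifolds and closed geodesics as harmonic maps with compact domain, followed by an application of case (a) of that proposition, is precisely that argument. Your optional unwinding of case (a) via Theorem \ref{thm:HOY} also matches the paper's own justification of the proposition, so there is no substantive difference.
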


\subsection{Geometric construction of pullback convex functions}

In this section, we will establish several constructions of strongly pullvex functions that are needed for the validity of Theorem \ref{thm:HOY}.

\paragraph{Conformal maps} Conformal maps are by definition maps that preserve the angle $$\cos(\theta) = \frac{\langle v,w \rangle}{\|v\|\|w\|}$$ between any two vectors $v,w$ and their pushforwards. There are two definitions of conformality which we will use in the subsequent sections.

\begin{defn}
A smooth map $u:(M,g) \rightarrow (N,h)$ between Riemannian manifolds is 
\begin{enumerate}
    \item \textbf{conformal} if $u^*h = e^{2\phi} g$ for a smooth function $\phi:M \rightarrow \R$.
    \item \textbf{horizontally conformal} if $u$ is a submersion and on the horizontal distribution $\mathcal{H} = (\ker du)^\perp$ there is a $\phi:M \rightarrow \R$ such that $h(du(X),du(Y)) = e^{2\phi}g(X, Y)$ for all horizontal vector fields $X,Y$, i.e. sections of $\mathcal{H}$.
\end{enumerate}
 The function $\phi$ is the \textbf{conformal factor} and if there exists a $B>0$ such that $e^{2\phi}\geq B$ then the adverb \textbf{strongly} will be used.
\end{defn}

The class of strongly conformal maps includes several interesting subclasses. For example, an isometric immersion is a conformal immersion with conformal factor $1$. A homothety is a conformal map with a constant conformal factor and a Riemannian submersion is a horizontally conformal map with conformal factor $1$.

\medskip

Conformality and horizontal conformality can also be characterised in terms of their singular values.

\begin{lemma}\label{lem:confpullvex}
Let $u:(M,g)\rightarrow (N,h)$ be a (horizontally) conformal map. The (non-zero) singular values $\lambda_1,\dots,\lambda_{\mathrm{rk}(u)}$ of $u$  are $\lambda_1= \dots = \lambda_{\mathrm{rk}(u)} = e^{\phi}$.
\end{lemma}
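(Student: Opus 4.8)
The plan is to compute the operator $(Du)^*\circ Du$ directly from the conformality condition and read off its eigenvalues, since by the discussion preceding the lemma the singular values are exactly the non-negative square roots of the eigenvalues of $(Du)^*\circ Du$, where $(Du)^* = \sharp_g\circ {}^tDu\circ \flat_h$.

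First I would treat the conformal case. By the defining property of the adjoint, for all $X,Y\in T_pM$ one has $g\left((Du)^*Du(X),Y\right) = h\left(Du(X),Du(Y)\right)$. The conformality relation $u^*h = e^{2\phi}g$ says precisely that the right-hand side equals $e^{2\phi}g(X,Y)$. Since $Y$ is arbitrary and $g$ is non-degenerate, this forces $(Du)^*\circ Du = e^{2\phi}\,\mathrm{Id}_{T_pM}$. Its only eigenvalue is $e^{2\phi}$, with multiplicity $m = \mathrm{rk}\,u$ (the map being an immersion), so every singular value equals $e^{\phi}$.

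Then I would handle the horizontally conformal case, where $u$ is a submersion and the relation only holds on the horizontal distribution $\mathcal{H} = (\ker du)^\perp$. Splitting $T_pM = \mathcal{H}_p\oplus\mathcal{V}_p$ with $\mathcal{V}_p = \ker d_pu$, the operator $(Du)^*\circ Du$ vanishes on $\mathcal{V}_p$, which accounts for the zero singular values. The point I would verify is that $(Du)^*\circ Du$ preserves $\mathcal{H}_p$: for $X\in\mathcal{H}_p$ and any $Z\in\mathcal{V}_p$ one computes $g\left((Du)^*Du(X),Z\right) = h\left(Du(X),Du(Z)\right) = 0$, since $Z\in\ker du$, so that $(Du)^*Du(X)\perp\mathcal{V}_p$, i.e. it lies in $\mathcal{H}_p$. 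Restricted to $\mathcal{H}_p$ the same computation as before yields $(Du)^*\circ Du|_{\mathcal{H}_p} = e^{2\phi}\,\mathrm{Id}_{\mathcal{H}_p}$, and hence the $\dim\mathcal{H}_p = \dim N = \mathrm{rk}\,u$ non-zero singular values are all equal to $e^{\phi}$.

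The argument is entirely routine; the only step requiring a moment of care is the invariance of the horizontal distribution under $(Du)^*\circ Du$ in the horizontally conformal case, as this is what guarantees that the non-zero eigenvalues come precisely from $\mathcal{H}$ and do not mix with the vertical directions.
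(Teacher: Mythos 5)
Your proposal is correct and follows essentially the same route as the paper: both arguments amount to evaluating the conformality relation $u^*h=e^{2\phi}g$ on the (co)kernel-orthogonal directions, the paper doing so on the singular basis via $\lambda_i^2=u^*h(v_i,v_i)=e^{2\phi}$ and you doing so by identifying the operator $(Du)^*\circ Du$ as $e^{2\phi}\,\mathrm{Id}$ on $\mathcal{H}$. Your verification that $(Du)^*\circ Du$ preserves the horizontal distribution is a small point the paper leaves implicit, and it is a welcome addition.
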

\begin{proof}
    Let $v_1,\dots,v_m$ and $w_1,\dots, w_n$ be the singular vectors of $D_pu$. Then $$\lambda_i^2 = \lambda_i^2 h(w_i,w_i) = u^*h(v_i,v_i) = e^{2\phi} g(v_i,v_i) = e^{2\phi}$$ shows that all singular values are $e^{\phi}$.
\end{proof}

By the previous result, a strongly conformal map is always an immersion, hence $\dim M \leq \dim N$. In the case of horizontal conformality the opposite condition $\dim M \geq \dim N$ holds.

\paragraph{Properties at infinity and wide maps}
As a weakening of the conformality condition, we define the following properties at infinity. This allows the formulation of more general statements about harmonic maps.

\begin{defn}
    Let $u:(M,g) \rightarrow (N,h)$ be a smooth map. Denote by $\lambda_1 \geq \dots \geq \lambda_m$ the singular values of $u$.
    \begin{enumerate}    
        \item We call $u$ \textbf{non-singular at infinity} if there is a $c>0$ such that $\lambda_i\geq c$ for all $i=1,\dots,\ell$ with $\ell=\min\{\dim M, \dim N\}$.
        \item We call $u$ \textbf{non-constant at infinity} if there is a $c>0$ such that $e(u) = \|Du\|^2 = \lambda_1^2+\dots +\lambda_m^2\geq c$.
    \end{enumerate}
\end{defn}

A map $u$ that is non-singular at infinity is automatically an immersion or a submersion, depending on the dimensions of $M$ and $N$. Thus, $\mathrm{rk}\, u = \min\{\dim M, \dim N\}$ whenever $u$ is non-singular at infinity. A map that is non-singular at infinity is automatically non-constant at infinity, but the converse does not hold.

Typical examples of maps being non-singular at infinity are strongly conformal and strongly horizontally conformal maps.

\medskip

\begin{defn}
    Let $P:(N,h) \rightarrow (B,\langle \cdot, \cdot \rangle)$ be a smooth map between Riemannian manifolds. $P$ is called \textbf{$k$-wide} if for every local $k$-frame $w_1,\dots, w_k$ of $TN$ the sum
    $$
    \sum_{i=1}^k \langle dP(w_i), dP(w_i) \rangle \geq c>0
    $$
    is bounded from below by some constant $c$.
\end{defn}

The previous definition is a kind of pullvexity assumption for the Riemannian metric $\langle \cdot, \cdot \rangle$ on $B$.

\begin{lemma}\label{lemma:wideconformal}
    Let $P:(N,h) \rightarrow (B,\langle \cdot, \cdot \rangle)$ be a strongly horizontally conformal map. If there exists a $k>0$ with $k + \mathrm{rk}\, P - \dim N \geq 1$, then $P$ is $k$-wide.
\end{lemma}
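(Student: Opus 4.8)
The plan is to reduce the statement to a pointwise estimate in a single tangent space and then combine a linear-algebraic dimension count with the uniform lower bound supplied by strong horizontal conformality. Fix $p \in N$ and an orthonormal $k$-frame $w_1,\dots,w_k$ of $T_pN$ (one reads \emph{frame} as \emph{orthonormal frame}, since otherwise rescaling the $w_i$ would make the sum arbitrarily small and no positive constant could exist). Because $P$ is a submersion, $d_pP$ is surjective, so $\mathrm{rk}\, P = \dim B$ and the vertical space $\mathcal{V} := \ker d_pP$ has dimension $\dim N - \mathrm{rk}\, P$. Writing $\mathcal{H} = \mathcal{V}^\perp$ for the horizontal distribution and $\pi_{\mathcal{H}},\pi_{\mathcal{V}}$ for the orthogonal projections, horizontal conformality gives $\|dP(w)\|^2 = e^{2\phi}\|\pi_{\mathcal{H}} w\|^2$ for every $w$, since $dP$ annihilates the vertical part. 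The strong hypothesis $e^{2\phi}\geq B$ then yields $\|dP(w_i)\|^2 \geq B\,\|\pi_{\mathcal{H}} w_i\|^2$.

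The heart of the argument is a lower bound on $\sum_{i=1}^k \|\pi_{\mathcal{H}} w_i\|^2$. First I would use orthonormality to write $\|\pi_{\mathcal{H}} w_i\|^2 = 1 - \|\pi_{\mathcal{V}} w_i\|^2$, so that $\sum_{i=1}^k \|\pi_{\mathcal{H}} w_i\|^2 = k - \sum_{i=1}^k \|\pi_{\mathcal{V}} w_i\|^2$. Extending $w_1,\dots,w_k$ to an orthonormal basis of $T_pN$ and using $\sum_j \langle \pi_{\mathcal{V}} w_j, w_j \rangle = \mathrm{tr}\,\pi_{\mathcal{V}} = \dim \mathcal{V}$ together with the nonnegativity of each summand $\langle \pi_{\mathcal{V}} w_i, w_i\rangle = \|\pi_{\mathcal{V}} w_i\|^2$, one obtains $\sum_{i=1}^k \|\pi_{\mathcal{V}} w_i\|^2 \leq \dim\mathcal{V} = \dim N - \mathrm{rk}\, P$. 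Hence $\sum_{i=1}^k \|\pi_{\mathcal{H}} w_i\|^2 \geq k - (\dim N - \mathrm{rk}\, P) = k + \mathrm{rk}\, P - \dim N \geq 1$ by the dimensional hypothesis.

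Combining the two steps gives $\sum_{i=1}^k \langle dP(w_i),dP(w_i)\rangle \geq B \cdot 1 = B > 0$, which is exactly the $k$-wideness of $P$ with constant $c = B$. The point requiring care is that $c$ must be uniform over all points $p$ and all frames: this holds precisely because the bound $e^{2\phi}\geq B$ is uniform (the content of \emph{strongly} horizontally conformal) and because the dimension count produces the point-independent integer bound $k + \mathrm{rk}\, P - \dim N$. I expect the only genuinely delicate step to be the trace estimate $\sum_{i=1}^k \|\pi_{\mathcal{V}} w_i\|^2 \leq \dim\mathcal{V}$, which is the linear-algebraic incarnation of ``a $k$-frame cannot be squeezed entirely into the vertical space once $k$ exceeds $\dim\mathcal{V}$''; the rest is bookkeeping.
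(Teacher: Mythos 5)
Your proof is correct and follows essentially the same route as the paper: decompose each $w_i$ into horizontal and vertical parts, use that $dP$ annihilates the vertical part and scales the horizontal part by $e^{2\phi}\geq B$, and bound $\sum_{i=1}^k \|\pi_{\mathcal{H}}w_i\|^2$ from below by $k+\mathrm{rk}\,P-\dim N$. The only difference is that the paper waves at this last step as ``a basic linear algebra computation,'' whereas you carry it out explicitly via the trace identity $\sum_i\langle\pi_{\mathcal{V}}w_i,w_i\rangle\leq\mathrm{tr}\,\pi_{\mathcal{V}}=\dim N-\mathrm{rk}\,P$, which is exactly the intended argument.
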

\begin{proof}
    Denote by $L$ the horizontal projection of $TN$ onto the horizontal distribution $\mathcal{H} = (\ker dP)^\perp$. For any orthonormal $k$-frame $w_1,\dots, w_k$ of $TN$ we have 
    $$
    \sum_{i=1}^k \langle dP(w_i), dP(w_i) \rangle = \sum_{i=1}^k \langle dP(L(w_i)), dP(L(w_i)) \rangle = e^{2\phi} \sum_{i=1}^k h(L(w_i), L(w_i)).
    $$
    The result follows by estimating $\sum_{i=1}^k h(L(w_i), L(w_i))$ from below. This is achieved with a basic linear algebra computation, resulting in the minimum of $\sum_{i=1}^k h(L(w_i), L(w_i))$ over all orthonormal $k$-frames being bounded from below by $k + \mathrm{rk}\, P - \dim N \geq 1$.
\end{proof}

\paragraph{The Tomography theorem}
The original idea of tomography, i.e. the concept of projecting an object to a lower dimensional subspace, motivates the theorem below. 
Our idea is projecting the image of a harmonic map via a totally geodesic map and inspect the image of the composition via pullvexity. As a preliminary lemma, we will consider the special case when the rank of the composition is annihilated. 

\begin{lemma}
    Let $u:M\rightarrow N$ be harmonic and $P:N \rightarrow B$ totally geodesic maps. If $P\circ u$ is of rank 0 in an open set, then $u$ maps into a single fiber of $P$.
\end{lemma}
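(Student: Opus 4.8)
The plan is to reduce the statement to the unique continuation property of harmonic maps, exploiting that the composition of a harmonic map with a totally geodesic map is again harmonic.

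First I would feed the hypotheses into the second composition formula of Proposition \ref{prop:compositionformula}. Since $u$ is harmonic, $\Delta u = 0$, and since $P$ is totally geodesic, $\nabla dP = 0$, whence $u^*\nabla dP = 0$. Consequently
$$
\Delta(P\circ u) = dP(\Delta u) + \mathrm{tr}_g(u^*\nabla dP) = 0,
$$
so that $P\circ u : M \to B$ is itself a harmonic map.

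Next I would unpack the rank hypothesis. Saying that $P\circ u$ has rank $0$ on an open set $U\subseteq M$ is the same as $d(P\circ u)\equiv 0$ on $U$; hence $P\circ u$ is locally constant there, and in particular equal to a single point $b\in B$ on some nonempty connected open ball $V\subseteq U$. The constant map $M \to B$, $p\mapsto b$, is trivially harmonic and agrees with $P\circ u$ on $V$.

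The last step, which I regard as the crux, is to invoke the unique continuation theorem for harmonic maps (Sampson \cite{sampson}): two harmonic maps from a connected manifold that coincide on a nonempty open set must coincide everywhere. Applying this to $P\circ u$ and the constant map $b$ — and using that $M$ is connected by our standing conventions — yields $P\circ u \equiv b$ on all of $M$, i.e. $u(M)\subseteq P^{-1}(b)$, which is precisely the assertion that $u$ maps into a single fiber of $P$. The analytic content hidden in this step is that, in local coordinates on $B$, the harmonic map equation for $P\circ u$ is a semilinear elliptic system whose nonlinearity is quadratic in the gradient, so unique continuation follows from the Aronszajn-type estimates underlying Sampson's result; everything preceding it is a purely formal consequence of the two vanishings $\Delta u=0$ and $\nabla dP = 0$ plugged into the composition formula.
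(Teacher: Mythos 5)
Your proof is correct and takes essentially the same route as the paper: both establish harmonicity of $P\circ u$ via the composition formula for totally geodesic $P$ and harmonic $u$, and both then invoke Sampson's Theorem 1 (unique continuation) to propagate the constancy from an open set to all of the connected manifold $M$. Your intermediate step of explicitly comparing $P\circ u$ with the constant map $b$ is just a slightly more spelled-out version of the paper's appeal to the fact that rank zero on an open set extends to the whole connected component.
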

\begin{proof}
    By Proposition \ref{prop:compositionformula} (a), $\Delta(P \circ u) = dP(\Delta u) + \mathrm{tr}_g(u^*\nabla dP) = 0$ the composition is harmonic. Now, by Theorem 1 in \cite{sampson}, the property of being of rank $\mathrm{rk}(P \circ u) = 0$ on an open set extends to the whole connected component. Since $M$ is connected, this means that $P \circ u \equiv b \in B$, i.e. $u(M) \subseteq P^{-1}(b)$.
\end{proof}

We have seen that projecting on an open set to a constant already implies that the image of the harmonic map lies inside a fiber of the totally geodesic map. The Tomography theorem covers cases where the harmonic map is not annihilated by totally geodesic maps.

\begin{thm}[Tomography theorem]\label{thm:tomography}
    Let $(M,g)$ be stochastically complete and consider the smooth maps $u:(M,g) \rightarrow (N,h)$, $P:(N,h) \rightarrow (B,\langle \cdot, \cdot \rangle)$ and $\eta:(B,\langle \cdot, \cdot \rangle)\rightarrow \R$ between Riemannian manifolds. If one of the cases
    \begin{enumerate}
    \item \begin{enumerate}[label=(\roman*)]
            \item $u$ is harmonic and non-singular at infinity,
            \item either $P$ is totally geodesic and $k$-wide with $k \geq \mathrm{rk}\,u$ or 
            \item[(ii')] $P$ is totally geodesic, strongly horizontally conformal with the rank condition $\mathrm{rk}\, u + \mathrm{rk}\, P - \dim N \geq 1$ and
            \item $\eta$ is strongly convex
        \end{enumerate}
    \item \begin{enumerate}[label=(\roman*)]
            \item $u$ is harmonic and strongly (horizontally) conformal,
            \item $P$ is totally geodesic and
            \item $\eta$ is strongly $k$-mean pullvex with respect to $P$ for $k \geq \mathrm{rk}\,u$
        \end{enumerate}
    \item \begin{enumerate}[label=(\roman*)]
            \item $u$ is harmonic and strongly (horizontally) conformal,
            \item $P$ is totally geodesic and strongly horizontally conformal,
            \item the dimensional condition $\mathrm{rk}\, u + \mathrm{rk}\, P - \dim N \geq k$ and
            \item $\eta$ is strongly $k$-mean convex
        \end{enumerate}
    \end{enumerate}
    holds, then $f:= \eta \circ P$ is strongly pullvex with respect to $u$ and $f\circ u$ is unbounded.
\end{thm}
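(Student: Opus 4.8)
The plan is to prove the two conclusions separately, the second being almost immediate once the first is in place. The claim that $f\circ u$ is unbounded is precisely the output of Theorem \ref{thm:HOY}(a) applied to the function $f=\eta\circ P$, so the entire content of the theorem reduces to verifying that $f$ is strongly pullvex with respect to $u$, i.e. that $\mathrm{tr}_g(u^*\mathrm{Hess}\, f)\geq C>0$ for a uniform constant $C$. I would isolate this as the single goal and read the three cases as three different recipes for producing such a $C$.

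First I would compute the pulled-back Hessian. Applying the composition formula (Proposition \ref{prop:compositionformula}(a)) to $f=\eta\circ P$ gives $\nabla d(\eta\circ P)=d\eta(\nabla dP)+P^*\nabla d\eta$; since $P$ is totally geodesic in every case, $\nabla dP=0$ and hence $\mathrm{Hess}(\eta\circ P)=P^*\mathrm{Hess}\,\eta$. Consequently $\mathrm{tr}_g(u^*\mathrm{Hess}\,f)=\mathrm{tr}_g((P\circ u)^*\mathrm{Hess}\,\eta)$. Expanding this trace in a singular value frame of $u$ as in Lemma \ref{lem:pullvexlocally} yields the working identity
\[
\mathrm{tr}_g(u^*\mathrm{Hess}\,f)=\sum_{i=1}^{\mathrm{rk}\,u}\lambda_i^2\,\mathrm{Hess}\,\eta\big(dP(w_i),dP(w_i)\big),
\]
where $\lambda_i$ and $w_i$ are the singular values and target singular vectors of $u$. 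The two independent ingredients are then a lower bound on the weights $\lambda_i^2$ and a lower bound on the frame sum $\sum_i\mathrm{Hess}\,\eta(dP(w_i),dP(w_i))$ taken over the a priori arbitrary orthonormal frame $w_1,\dots,w_{\mathrm{rk}\,u}$.

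The weights are dealt with uniformly: when $u$ is non-singular at infinity one has $\lambda_i^2\geq c^2$ for $i\leq \mathrm{rk}\,u$, and when $u$ is strongly (horizontally) conformal Lemma \ref{lem:confpullvex} gives $\lambda_i^2=e^{2\phi}\geq B$. In all cases the weights are bounded below by a positive constant and can be pulled out, reducing matters to the frame sum. I would encode this sum through the symmetric operator $Q:=dP^*(\mathrm{Hess}\,\eta)\,dP$ on $TN$, since the quantity to estimate is $\sum_i\langle Q w_i,w_i\rangle$, whose minimum over orthonormal $(\mathrm{rk}\,u)$-frames is the sum of the $\mathrm{rk}\,u$ smallest eigenvalues of $Q$ (Ky Fan). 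In case (a), strong convexity of $\eta$ gives $\mathrm{Hess}\,\eta\geq C_\eta>0$, so $\langle Qw_i,w_i\rangle\geq C_\eta\|dP(w_i)\|^2$, and the wideness of $P$ bounds $\sum_i\|dP(w_i)\|^2$ below (the rank condition in (ii') yields exactly $(\mathrm{rk}\,u)$-wideness through Lemma \ref{lemma:wideconformal}, and (ii) is the corresponding direct hypothesis). In case (b) the frame sum is bounded below directly, since strong $k$-mean pullvexity of $\eta$ with respect to $P$ is precisely a uniform positive lower bound for frame sums of the form $\sum_i\mathrm{Hess}\,\eta(dP(e_i),dP(e_i))$, applied at the frame size $\mathrm{rk}\,u$.

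The main obstacle is case (c), where $\mathrm{Hess}\,\eta$ is only strongly $k$-mean convex and hence may be indefinite, so pointwise positivity of $\langle Q w_i,w_i\rangle$ fails. Here I would use that $P$ is strongly horizontally conformal to describe $Q$ explicitly: it vanishes on the $(\dim N-\mathrm{rk}\,P)$-dimensional vertical distribution $\ker dP$ and, on the horizontal distribution, equals $e^{2\psi}$ times the pullback of $\mathrm{Hess}\,\eta$, so its eigenvalues are $\dim N-\mathrm{rk}\,P$ zeros together with $e^{2\psi}\nu_1,\dots,e^{2\psi}\nu_{\mathrm{rk}\,P}$, where the $\nu_j$ are the eigenvalues of $\mathrm{Hess}\,\eta$. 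The crux is then a counting argument: the $\mathrm{rk}\,u$ smallest eigenvalues of $Q$ can include at most $\dim N-\mathrm{rk}\,P$ of the zeros, hence at least $\mathrm{rk}\,u-(\dim N-\mathrm{rk}\,P)\geq k$ of the terms $e^{2\psi}\nu_j$, and these necessarily capture the $k$ most negative $\nu_j$ while every further contribution is non-negative. Invoking strong $k$-mean convexity in the form $\nu_1+\dots+\nu_k\geq C_\eta>0$ and $e^{2\psi}\geq B'$ then gives $\sum_i\langle Q w_i,w_i\rangle\geq B'C_\eta>0$. I expect the delicate point to be exactly this bookkeeping — separating negative, zero and positive contributions to the minimizing frame and using the dimensional inequality $\mathrm{rk}\,u+\mathrm{rk}\,P-\dim N\geq k$ to guarantee that the $k$ smallest $\nu_j$ are all present. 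Combining the weight bound with any of the three frame-sum bounds produces the uniform $C$, establishing strong pullvexity and, via Theorem \ref{thm:HOY}, the unboundedness of $f\circ u$.
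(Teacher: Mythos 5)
Your proposal is correct and follows the paper's strategy for cases (a) and (b) essentially verbatim: reduce everything to strong pullvexity via Theorem \ref{thm:HOY}(a), kill $\nabla dP$ by total geodesy so that $\mathrm{Hess}\,f = P^*\mathrm{Hess}\,\eta$, expand $\mathrm{tr}_g(u^*\mathrm{Hess}\,f)$ in a singular-value frame, pull out the lower bound on the weights $\lambda_i^2$, and bound the remaining frame sum using wideness (resp.\ Lemma \ref{lemma:wideconformal}) in (a) and $k$-mean pullvexity in (b). The only genuine divergence is case (c): the paper composes the two conformal maps into $v = P\circ u$, asserts its non-zero singular values are $e^{\phi+\psi}$ with $\mathrm{rk}\,v \geq k$, and applies strong $k$-mean convexity to a frame in $TB$; you instead keep the maps separate, package the estimate as the operator $Q = dP^*(\mathrm{Hess}\,\eta)\,dP$ on $TN$, and run a Ky Fan eigenvalue-counting argument using $\mathrm{rk}\,u + \mathrm{rk}\,P - \dim N \geq k$ together with the fact that strong $k$-mean convexity forces at most $k-1$ negative eigenvalues of $\mathrm{Hess}\,\eta$. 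Your route is slightly longer but arguably more careful at exactly the point where the paper is terse: the image $du(T_pM)$ need not be aligned with the horizontal distribution of $P$, so the singular values of the composition are not all $e^{\phi+\psi}$ in general, and your formulation on $TN$ sidesteps this by minimising over all orthonormal $(\mathrm{rk}\,u)$-frames. Both arguments yield the same uniform constant up to the labelling of the bounds.
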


\begin{proof}
    We will use the following notation: 
    \begin{itemize}
        \item $v_1,\dots,v_m$ and $w_1,\dots, w_n$ will denote the singular vectors of $u$ for the cases (a), (b) and for (c) for the map $v:=P\circ u$.
        \item $c_1>0$ will denote the lower bound on the singular values of $u$ in (a). In the cases (b) and (c) it is the lower bound of the conformal factor $e^{\phi}$ of $u$.
        \item In (a), $c_2$ denotes the lower bound of $k$-wideness and in case (c) it is the lower bound of the conformal factor $e^\psi$ of $P$.
        \item Lastly, $c_3$ denotes the lower bound for the strong pullvexity of $\eta$.
    \end{itemize}

    By assuming $f$ to be strongly pullvex with respect to $u$, the second conclusion is just an application of Theorem \ref{thm:HOY} (a). This means that we need to prove the strong pullvexity of $f= \eta \circ P$ in each of the cases.

    \medskip
    
    By $P$ being totally geodesic, $\Delta(P \circ u) = dP(\Delta u) + \mathrm{tr}_g(u^*\Delta dP) = 0$, i.e. the composition $v=P \circ u$ is harmonic. Similarly, $\mathrm{Hess}\, f = d\eta(\nabla d P) + P^*\mathrm{Hess}\, \eta = P^*\mathrm{Hess}\, \eta$ holds by Proposition \ref{prop:compositionformula}.
    
    \medskip
    We begin with case (a) (i), (ii) and (iii). The following estimates
    \begin{align*}
        \Delta_M(f \circ u) &= \sum_{i=1}^m \, \mathrm{Hess}\, f(du(v_i),du(v_i)) \geq c_1^2 \sum_{i=1}^{\mathrm{rk}\, u} \, \mathrm{Hess}\, f(w_i,w_i) 
        \\ &= c_1^2 \sum_{i=1}^{\mathrm{rk}\, u} \, \mathrm{Hess}\, \eta(dP(w_i),dP(w_i)) \geq c_1^2c_3 \sum_{i=1}^{\mathrm{rk}\, u}\, \langle dP(w_i),dP(w_i)\rangle \geq c_1^2c_2c_3,
    \end{align*}
    show the strong pullvexity of $f$. By lemma \ref{lemma:wideconformal} the wideness condition (ii) can be replaced by the stronger dimensional assumption (ii') for concluding the same result.

    \medskip

    In case (b), we obtain
    \begin{align*}
        \Delta_M(f \circ u) &= \sum_{i=1}^m \, \mathrm{Hess}\, f(du(v_i),du(v_i)) = e^{2\phi} \sum_{i=1}^{\mathrm{rk}\, u} \, \mathrm{Hess}\, f(w_i,w_i)
        \\ & = e^{2\phi} \sum_{i=1}^{\mathrm{rk}\, u} \, P^*\mathrm{Hess}\, \eta(w_i,w_i) = e^{2\phi} c_3 \geq c_1^2c_3
    \end{align*}
    proving the strong pullvexity of $f$.
    \medskip

    In case (c), the dimensional assumption implies the rank condition $rk(P \circ u) \geq \mathrm{rk}\, u + \mathrm{rk}\, P  - \dim N \geq k$. The non-zero singular values of $v:=P\circ u$ are $e^{\phi+\psi}$. The computation
    \begin{align*}
        \Delta_M(\eta \circ v) &= e^{2(\phi+\psi)} \sum_{i=1}^{\mathrm{rk}\, u} \, \mathrm{Hess}\, \eta (w_i,w_i) \geq e^{2(\phi+\psi)} c_3 \geq c_1^2c_2^2c_3
    \end{align*}
    shows the strong pullvexity of $f$.
\end{proof}

\begin{remark}\label{rmk:tomography}
    \begin{enumerate}
        \item The properties of $P$ and $\eta$ only need to hold on the images $u(M)$ and $(P\circ u)(M)$ respectively. Thus, by knowing some additional information about $u$, the choice of $P$ and $\eta$ can lead to more general statements.
        \item Even along the images mentioned previously, the properties are only important on an open dense set on $M$, as explained in Remark \ref{rmk:OYobservation}.
        \item The Tomography theorem can be localised, provided we have additional knowledge as in Theorem \ref{thm:HOY}.
        \item Lastly, whenever in the proof the constants are multiplied $c_1c_2c_3$, we could assume that these are strictly positive functions such that their product $c_1c_2c_3 \geq c > 0$ is bounded. This would allow even more flexibility of the definition in certain situations.
    \end{enumerate}
\end{remark}

\subsection{Applications}
Previously, we obtained an Omori-Yau maximum principle and constructed strongly pullvex functions geometrically. Now, we will explicitly apply the aforementioned theorems.

\paragraph{The Calabi conjectures}

We turn our attention to the Calabi conjectures (a) and (b) in the case of the domain being weak Omori-Yau (stochastically complete).

\begin{thm}[Calabi conjectures]\label{thm:calabi}
Let $M$ be a stochastically complete manifold and $u:M \rightarrow \R^n$ a minimal immersion.
    \begin{enumerate}
        \item The image of $u$ is unbounded.
        \item Let $P:\R^n \rightarrow \R^n$ be an orthogonal projection. If $\mathrm{rk}\,u + \mathrm{rk}\, P - n \geq 1$, then $P \circ u$ is unbounded.
    \end{enumerate}
\end{thm}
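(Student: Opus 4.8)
The plan is to obtain both statements as immediate corollaries of the Tomography theorem (Theorem \ref{thm:tomography}, case (a)), choosing for $\eta$ the Euclidean squared norm $\eta(y) = \langle y, y\rangle$, whose Hessian is $2\,\mathrm{Id}$ and which is therefore strongly convex with constant $2$. The first thing to record is that the hypotheses imposed on $u$ by Theorem \ref{thm:tomography} (a)(i) are automatic for a minimal immersion. A minimal isometric immersion $u:M \rightarrow \R^n$ is harmonic, since the tension field of an isometric immersion is its (unnormalised) mean curvature vector, which vanishes precisely when $u$ is minimal. Moreover, being isometric means $u^*h = g$, so $(Du)^*\circ Du = \mathrm{Id}$ and every singular value equals $1$; in particular $u$ is non-singular at infinity (with $c=1$) and $\mathrm{rk}\,u = m = \dim M$.

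For part (a) I would take $N = B = \R^n$, $P = \mathrm{Id}$ and $\eta(x) = \langle x, x\rangle$. The identity is totally geodesic ($\nabla d\,\mathrm{Id} = 0$) and, since $dP = \mathrm{Id}$, for any orthonormal $k$-frame $w_1,\dots,w_k$ one has $\sum_{i=1}^k \langle dP(w_i), dP(w_i)\rangle = k$, so $\mathrm{Id}$ is $k$-wide with constant $k$; choosing $k = m \geq \mathrm{rk}\,u$ verifies hypothesis (ii). With (i) established above and (iii) supplied by the strong convexity of $\eta$, Theorem \ref{thm:tomography} (a) yields that $f = \eta \circ P = \langle \cdot, \cdot\rangle$ is strongly pullvex with respect to $u$ and that $f\circ u = \langle u, u\rangle$ is unbounded, whence $u(M)$ is unbounded. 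Concretely this is nothing but Theorem \ref{thm:HOY} applied to $\langle u, u\rangle$, since the composition formula of Proposition \ref{prop:compositionformula} gives $\Delta_M \langle u, u\rangle = \mathrm{tr}_g(u^*\mathrm{Hess}\,\eta) = 2\,\mathrm{tr}_g(u^*h) = 2m$.

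For part (b) I would view the orthogonal projection $P:\R^n \rightarrow \R^n$ as a submersion onto its image $B := \mathrm{im}\,P \cong \R^{\mathrm{rk}\,P}$, and again take $\eta(y) = \langle y, y\rangle$ on $B$. As a linear map $P$ is totally geodesic, and on its horizontal distribution $\mathcal{H} = (\ker dP)^\perp = \mathrm{im}\,P$ it restricts to an isometry onto $B$, so $P$ is strongly horizontally conformal with conformal factor $1$. The standing hypothesis $\mathrm{rk}\,u + \mathrm{rk}\,P - n \geq 1$ is exactly the rank condition appearing in the alternative (ii'). Since (i) holds and $\eta$ is strongly convex, Theorem \ref{thm:tomography} (a) applies with (ii') in place of (ii) and shows that $P\circ u$ is unbounded; unboundedness in $B \cong \R^{\mathrm{rk}\,P}$ is equivalent to unboundedness in $\R^n$.

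The only genuine subtlety, and the step I would take most care over, is the translation of the classical notion of a minimal immersion into the language of Theorem \ref{thm:tomography}: the isometric condition forces all singular values to equal $1$ (so non-singularity at infinity comes for free), and minimality is precisely harmonicity of $u$. A secondary point is the reinterpretation of the non-surjective map $P:\R^n \rightarrow \R^n$ as a horizontally conformal submersion onto its image $\R^{\mathrm{rk}\,P}$, which is what makes alternative (ii') applicable; beyond this, the argument reduces to a bookkeeping check of the constants $c_1, c_2, c_3$ in the proof of the Tomography theorem.
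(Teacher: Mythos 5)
Your proposal is correct and follows essentially the same route as the paper: both reduce the statement to Theorem \ref{thm:tomography} (a) with $\eta$ the squared distance from the origin (your $\langle y,y\rangle$ is the paper's $d(0,x)^2$), taking $P=\mathrm{Id}$ for part (a) and $B=\mathrm{Im}\,P\cong\R^{\mathrm{rk}\,P}$ with the rank condition (ii') for part (b). Your added verifications --- that a minimal isometric immersion is harmonic with all singular values equal to $1$, hence non-singular at infinity, and that an orthogonal projection is a Riemannian submersion onto its image --- are exactly the facts the paper leaves implicit.
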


\begin{proof}
    We will proceed by proving the statements for harmonic maps which are non-singular at infinity. As stated previously, these include minimal immersions as a special case.
    \medskip
    
    Statement (a) follows by Theorem \ref{thm:tomography} (a) with $N=B$, $P=\mathrm{Id}$ and $\eta(x) = d(0,x)^2$ the distance-squared function from $0\in \R^n$.

    \medskip

    For (b), let $N= \R^n$, $B = \mathrm{Im}(P) \cong \R^{\mathrm{rk}\, P}$ and $\eta(x) = d(0,x)^2$ the distance-squared function from $0\in \mathrm{Im}(P)$. Geometrically, $f = \eta \circ P$ measures the square of the distance of $x$ from $\ker P \subseteq \R^n$. Then by Theorem \ref{thm:tomography} (a) the statement follows.
    
\end{proof}

\begin{remark}
    \begin{enumerate}
        \item Note that Theorem \ref{thm:calabi} (b) proves the “more ambitious” Calabi conjecture for stochastically complete minimal hypersurfaces in $\R^n$ for $n \geq 4$ and projections of rank $(n-2)$ (see Conjecture 0.2 in \cite{ColdingMinicozzi}). 
        \item Theorem \ref{thm:calabi} shows that the example constructed by Nadirashvili cannot be stochastically complete, since any projection of rank $2$ would project the example to a bounded subset.
        \item The dimensional assumption $\dim M + \mathrm{rk}\, P - n \geq 1$ is not satisfied for $\dim M = 2, \mathrm{rk}\, P = 1$ and $n=3$, i.e. the results of Colding-Minicozzi \cite{ColdingMinicozzi} are not covered by Theorem \ref{thm:calabi}. Interestingly, under the assumption of $M$ satisfying the \textbf{$L^\infty$-Liouville property} \footnote{See Chapter 13 in \cite{Grigoryan}}, i.e. every bounded solution $v$ of $\Delta_Mv = 0$ needs to be constant, more can be proven. Let $d_P^2$ denote the distance-squared function from $\ker P$ in $\R^3$. In this case, by the boundedness of $P\circ u$ we could conclude $d_P^2 \circ u \cong c \in \R$, which would mean that $M$ is contained in an affine displacement of $\ker P$.
        \item Notice that in Theorem \ref{thm:calabi} (b) a wideness assumption would prove a more general result. Thus, there is a naturally arising question:
        \begin{quote}
            Are orthogonal projections of rank $1$ automatically $2$-wide along\footnote{For the $k$-wideness it is sufficient to consider orthonormal vectors tangent to $u(M)$. In this case $P$ is $k$-wide along $M$} $M$, where $M \rightarrow \R^3$ is a stochastically complete minimally immersed/embedded surface?
        \end{quote}
        This would reprove the case of the Calabi conjecture settled by Colding-Minicozzi.
    \end{enumerate}
\end{remark}

\paragraph{Conformal maps}
Conformal harmonic maps, especially minimal maps, count among the nicest harmonic maps, motivating the first applications in this class of maps.

\begin{prop}\label{prop:conformalpullvex}
    Let $(M,g)$ be stochastically complete and $u:(M,g) \rightarrow (N,h)$ a strongly conformal harmonic map of rank $\mathrm{rk}\, u \geq k$. Then for any strongly $k$-mean convex function $f:M \rightarrow \R$ the composition $f \circ u$ is unbounded.
\end{prop}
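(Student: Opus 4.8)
The plan is to reduce everything to the strong pullvexity of $f$ with respect to $u$ and then invoke Theorem~\ref{thm:HOY}~(a). So the only thing to establish is that there is a constant $c>0$ with $\mathrm{tr}_g(u^*\mathrm{Hess}\, f)\geq c$ at every point of $M$; the unboundedness of $f\circ u$ from above, and hence its unboundedness, is then immediate. This is morally case (b) of the Tomography theorem with $P=\mathrm{Id}$, but since here $f$ is only assumed strongly $k$-mean convex (not $k$-mean pullvex) a short extra argument at the level of eigenvalues is needed.

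First I would record the local shape of the pullvexity integrand. Since $u$ is strongly conformal, Lemma~\ref{lem:confpullvex} gives that its non-zero singular values are $\lambda_1=\dots=\lambda_{\mathrm{rk}\, u}=e^{\phi}$, with $e^{2\phi}\geq B>0$ by strong conformality (and $\mathrm{rk}\, u=\dim M$, such a map being an immersion). Inserting this into Lemma~\ref{lem:pullvexlocally} at a point $p$, with singular vectors $w_1,\dots,w_{\mathrm{rk}\, u}$ spanning a subspace of $T_{u(p)}N$, yields
\begin{equation*}
\mathrm{tr}_g(u^*\mathrm{Hess}\, f)_p = e^{2\phi(p)}\sum_{i=1}^{\mathrm{rk}\, u}\mathrm{Hess}\, f\big(w_i,w_i\big),
\end{equation*}
so it remains to bound the bracketed sum, which is the trace of $\mathrm{Hess}_{u(p)}f$ restricted to the $\mathrm{rk}\, u$-dimensional subspace $\mathrm{span}(w_1,\dots,w_{\mathrm{rk}\, u})$, from below by a positive constant.

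The heart of the matter, and the step I expect to be the main obstacle, is the following linear-algebra fact: $k$-mean convexity controls sums of exactly $k$ eigenvalues, whereas here we are summing $\mathrm{rk}\, u\geq k$ of them, so one must check that the extra eigenvalues cannot spoil the bound. Let $\mu_1\leq\dots\leq\mu_n$ be the eigenvalues of $\mathrm{Hess}_{u(p)}f$. By the Ky Fan minimum principle the trace over any $\mathrm{rk}\, u$-dimensional subspace is at least $\mu_1+\dots+\mu_{\mathrm{rk}\, u}$. Strong $k$-mean convexity gives $\mu_1+\dots+\mu_k\geq C>0$, and since $\mu_k$ is the largest of the $k$ smallest eigenvalues we obtain $k\mu_k\geq\mu_1+\dots+\mu_k\geq C$, i.e. $\mu_k\geq C/k>0$; hence $\mu_{k+1},\dots,\mu_{\mathrm{rk}\, u}\geq\mu_k>0$ and therefore
\begin{equation*}
\sum_{i=1}^{\mathrm{rk}\, u}\mathrm{Hess}\, f(w_i,w_i)\;\geq\;\sum_{j=1}^{\mathrm{rk}\, u}\mu_j\;\geq\;\sum_{j=1}^{k}\mu_j\;\geq\;C.
\end{equation*}
Combining the two displays with $e^{2\phi}\geq B$ gives the pointwise bound $\mathrm{tr}_g(u^*\mathrm{Hess}\, f)\geq BC>0$, so $f$ is strongly pullvex with respect to $u$ and Theorem~\ref{thm:HOY}~(a) finishes the proof. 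The essential subtlety, which the averaging step above resolves, is precisely that one cannot simply discard the eigenvalues of index beyond $k$: it is the positivity $\mu_k>0$ forced by $k$-mean convexity that makes their contribution harmless.
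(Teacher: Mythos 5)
Your proof is correct, but it takes a different route from the paper: the paper's entire proof is the one-line citation ``This follows by Theorem~\ref{thm:tomography}~(c) with $N=B$ and $P=\mathrm{Id}$'' (with $P=\mathrm{Id}$ the dimensional condition $\mathrm{rk}\,u+\mathrm{rk}\,P-\dim N\geq k$ collapses to $\mathrm{rk}\,u\geq k$), whereas you re-derive the strong pullvexity of $f$ from scratch. Your computation is in substance the case-(c) computation of the Tomography theorem specialised to $\psi=0$, but with one genuine addition: the Ky Fan / smallest-eigenvalues step showing that $\sum_{i=1}^{\mathrm{rk}\,u}\mathrm{Hess}\,f(w_i,w_i)\geq\mu_1+\dots+\mu_{\mathrm{rk}\,u}\geq C$, using that strong $k$-mean convexity forces $\mu_k\geq C/k>0$ so the eigenvalues of index beyond $k$ only help. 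The paper's proof of Theorem~\ref{thm:tomography}~(c) simply writes $\sum_{i=1}^{\mathrm{rk}\,u}\mathrm{Hess}\,\eta(w_i,w_i)\geq c_3$ without comment, so your argument is exactly the justification that step implicitly relies on --- a worthwhile clarification rather than a detour. One small remark: you say this is ``morally case (b)'' and that an extra argument is needed because $f$ is only $k$-mean convex rather than $k$-mean pullvex; in fact, for $P=\mathrm{Id}$ the paper's own discussion in Section~2.2 identifies $k$-mean pullvexity with respect to the identity with $k$-mean convexity, so case (b) would also apply verbatim. Either way, your conclusion and the final appeal to Theorem~\ref{thm:HOY}~(a) match the paper's logic.
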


\begin{proof}
    This follows by Theorem \ref{thm:tomography} (c) with $N=B$ and $P=\mathrm{Id}$.
\end{proof}

Recall that, a level set hypersurface $S \subseteq (N,h)$ of a smooth function $f:N \rightarrow \R$ is $k$-mean convex if $\mathrm{Hess}_p\, f$ is $k$-mean convex for all $p\in S$. Note that this is an adaptation of Definition \ref{def:pullvexhypersurface}. A well-known theorem about mean-convex hypersurfaces and minimal surfaces in $\R^3$ is the following corollary.

\begin{cor}
Let $S\subseteq N$ be a $k$-mean convex hypersurface. Then a stochastically complete minimal submanifold $u:M \rightarrow N$ touching $S$ at $q = u(p)\in S$ can not be on the pullvex ($k$-mean convex side) side of $S$.  
\end{cor}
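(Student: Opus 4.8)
The plan is to obtain this corollary as a direct specialisation of the pullvex Sampson maximum principle (Proposition \ref{prop:pullvex_sampson}). Reading ``$k$-mean convex'' in the strong sense and tacitly assuming the rank condition $\mathrm{rk}\,u = \dim M \geq k$ (as in Proposition \ref{prop:conformalpullvex}), it suffices to check that $u$ satisfies the hypotheses on the map in Proposition \ref{prop:pullvex_sampson} and that $S$ is strongly pullvex with respect to $u$.

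First I would record the structural properties of a minimal immersion $u:M\rightarrow N$. Being isometric, $u^*h = g$, so $u$ is strongly conformal with conformal factor $\phi\equiv 0$; by Lemma \ref{lem:confpullvex} all its singular values equal $1$ and $\mathrm{rk}\,u = m = \dim M$. Moreover $u$ is harmonic, since its tension field is the vanishing mean curvature vector, and non-constant, since its differential is everywhere injective. These are exactly the standing hypotheses on $u$ in Proposition \ref{prop:pullvex_sampson}.

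Next I would establish the strong pullvexity of $S$. Choosing a defining function $f$ with $S=f^{-1}(r)$ and pullvex side $f^{-1}(-\infty,r)$, fix $p\in M$ with $u(p)\in S$. Since every singular value of $u$ equals $1$, Lemma \ref{lem:pullvexlocally} gives
$$
\mathrm{tr}_g(u^*\mathrm{Hess}\,f)_p = \sum_{i=1}^{m}\mathrm{Hess}_{u(p)}\,f(w_i,w_i),
$$
which is the trace of $\mathrm{Hess}_{u(p)}f$ restricted to the $m$-plane $du(T_pM)$. The heart of the argument is a linear-algebra estimate: if $\mu_1\leq\cdots\leq\mu_n$ are the eigenvalues of $\mathrm{Hess}_{u(p)}f$, the Ky Fan minimum principle bounds this trace below by $\mu_1+\cdots+\mu_m$. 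Strong $k$-mean convexity gives $\mu_1+\cdots+\mu_k\geq C>0$, which forces $\mu_k\geq C/k>0$ and hence $\mu_{k+1},\dots,\mu_m\geq 0$; therefore $\mu_1+\cdots+\mu_m\geq C$, so $S$ is strongly pullvex with respect to $u$, with the same constant at every touching point.

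Finally I would invoke Proposition \ref{prop:pullvex_sampson}: $u$ is non-constant harmonic, $S$ is strongly pullvex with respect to $u$, and $q=u(p)\in S$ is a touching point, so the image of $u$ cannot lie on the pullvex ($k$-mean convex) side of $S$. I expect the only real obstacle to be the linear-algebra step, specifically making precise how $k$-mean convexity of the full Hessian controls its trace over the higher-dimensional plane $du(T_pM)$; the decisive observation is that the strong $k$-mean convexity inequality already pins the $k$-th eigenvalue strictly above zero, so that the extra $m-k$ eigenvalues can only help.
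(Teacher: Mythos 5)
Your proposal is correct and follows essentially the same route as the paper: the paper's proof is the one-line combination of Proposition \ref{prop:pullvex_sampson} with Proposition \ref{prop:conformalpullvex}, and you reproduce exactly that decomposition, merely inlining the second ingredient (that a minimal immersion is a strongly conformal harmonic map of rank $\dim M$, so strong $k$-mean convexity along $S$ yields strong pullvexity) via Lemma \ref{lem:pullvexlocally} and the Ky Fan minimum principle. You also correctly flag the two hypotheses the paper leaves tacit, namely the strong reading of $k$-mean convexity and the rank condition $\dim M \geq k$.
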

\begin{proof}
    The proof is a combination of Proposition \ref{prop:pullvex_sampson} and Proposition \ref{prop:conformalpullvex}.
\end{proof}

An interesting application of Proposition \ref{prop:conformalpullvex} is by taking quadratic forms on $\R^n$ to be the pullvex functions. Let $\beta$ be a quadratic form of signature $(n_+,n_-,n_0)$ and we investigate the existence of harmonic maps inside $C_\beta:=\beta^{-1}(-\infty,r]$ for $r\in \R$. $C_\beta$ is a strongly $k$-mean convex cone if $\beta$ itself is strongly $k$-mean convex.

\begin{cor}\label{cor:coneharmonic}
    Let $u:M \rightarrow \R^n$ be a strongly conformal harmonic immersion of a stochastically complete manifold $M$ of dimension $m$. Then the image of $u$ cannot be included in any strongly $k$-mean convex cone for $k\leq m$.
\end{cor}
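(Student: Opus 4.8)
The plan is to argue by contradiction, reducing everything to Proposition \ref{prop:conformalpullvex}. Suppose the image of $u$ were contained in a strongly $k$-mean convex cone $C_\beta = \beta^{-1}(-\infty,r]$ for some $k \leq m$. By the definition preceding the statement, saying that $C_\beta$ is strongly $k$-mean convex is precisely the requirement that the defining quadratic form $\beta:\R^n \rightarrow \R$ is strongly $k$-mean convex. So the first step is merely to unpack this: we record that $\beta$ is a strongly $k$-mean convex function on $\R^n$.

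Next I would extract the boundedness that must be contradicted. From $u(M) \subseteq \beta^{-1}(-\infty,r]$ we immediately get $(\beta \circ u)(p) = \beta(u(p)) \leq r$ for every $p \in M$, so $\beta \circ u$ is bounded from above by $r$. This is the sole consequence of the containment hypothesis that we need.

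The third step is to check that the hypotheses of Proposition \ref{prop:conformalpullvex} are met with $N = \R^n$ and $f = \beta$. The manifold $M$ is stochastically complete and $u$ is a strongly conformal harmonic map by assumption. Since $u$ is an immersion, it has full rank $\mathrm{rk}\, u = \dim M = m$, and because $k \leq m$ we obtain the required rank bound $\mathrm{rk}\, u \geq k$. With $\beta$ strongly $k$-mean convex from the first step, Proposition \ref{prop:conformalpullvex} applies and yields that $\beta \circ u$ is unbounded, contradicting the boundedness established above. Hence no such containment can occur.

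There is essentially no analytic obstacle here: the entire force of the result is already packaged inside Proposition \ref{prop:conformalpullvex} (and, through it, the Tomography theorem and the Omori--Yau maximum principle for harmonic maps). The only points requiring care are the two pieces of bookkeeping above, namely identifying that strong $k$-mean convexity of the cone is exactly strong $k$-mean convexity of $\beta$, and observing that an immersion automatically satisfies the rank condition $\mathrm{rk}\, u = m \geq k$. Once these are in place, the corollary follows by a one-line contradiction.
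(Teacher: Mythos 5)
Your proposal is correct and follows exactly the route the paper intends: the paper's proof is the one-line remark that the corollary is a special case of Proposition \ref{prop:conformalpullvex}, and your argument simply unpacks that reduction (immersion gives $\mathrm{rk}\,u = m \geq k$, containment in $C_\beta$ gives boundedness of $\beta\circ u$, contradicting the unboundedness from the proposition). No gaps.
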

\begin{proof}
    This is again just a special case of \ref{prop:conformalpullvex}.
\end{proof}

\begin{ex} The following examples demonstrate some explicit applications of Proposition \ref{cor:coneharmonic}.  
\begin{enumerate}
    \item Define $\beta_\alpha(x,y,z) = x^2 + y^2 - \alpha\,z^2$ for $\alpha < 1$ and denote by $C_\alpha(r)=\beta_\alpha^{-1}(r)$. Then $\beta_\alpha$ is always $2$-mean convex and the result states that there is no strongly conformal harmonic immersion of a stochastically complete surface into any of the $C_\alpha(r)$ (see Figure \ref{fig:exconvex}). Even more explicitly, we obtain the classical statement that no isometrically embedded cylinder can be minimal. Note that for the choice $\alpha<0$ is also true for one-dimensional harmonic immersions, i.e. for geodesics, but for $0 \leq \alpha < 1$ this is not true any more since any such $C_\alpha(r)$ possesses geodesics inside.
    \item Choose positive constants $\delta_1,\dots,\delta_{\ell}$ such that $\sum_{i=1}^{\ell} \delta_i<1$ and define the quadratic form $\beta(x_1,\dots,x_n) = -\sum_{i=1}^\ell \delta_i x_i^2 + \sum_{i=\ell + 1}^n x_i^2$. By construction, $\beta$ is $\ell + 1$-mean convex and the previous proposition states that the cone defined by $\beta$ cannot have minimal immersions of stochastically complete as long as the dimension of $M$ is $\dim M \geq \ell + 1$. In the case of $\ell=4$ and $n=8$ the quadratic form defines a cone that reminds of the Simons cone. 
\end{enumerate}
\end{ex}

\paragraph{Cartan-Hadamard manifolds}

Recall that a Cartan-Hadamard manifold $(N,h)$ is a complete simply connected Riemannian manifold with non-positive sectional curvature, i.e. $K_N\leq 0$. Some obvious examples are the Euclidean space $\R^n$ and the hyperbolic space $\mathbb{H}^n$.

\begin{prop}
    Let $(M,g)$ be stochastically complete, $(N,h)$ Cartan-Hadamard with sectional curvature $K_N\leq K<0$, and let $S\subseteq N$ be a closed totally geodesic submanifold. Denote by $d_S^2:N\rightarrow \R$ the distance-squared function $d_S^2(p) = \inf_{q\in S}(d(p,q))^2$ from $S$. Then for any harmonic immersion $u:M \rightarrow N$ which is non-constant at infinity, the composition $d_S^2\circ u$ needs to be unbounded.
\end{prop}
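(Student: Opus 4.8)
The plan is to show that the distance--squared function $f:=d_S^2$ is strongly pullvex with respect to $u$ on the region far from $S$, and then to invoke the localised maximum principle Theorem \ref{thm:HOY}(b). Since $N$ is Cartan--Hadamard and $S$ is closed and totally geodesic (hence convex, with no focal points along its normal geodesics), the nearest--point projection onto $S$ is well defined and $d_S$ is smooth on $N\setminus S$; thus $\mathrm{Hess}\, f$ is available everywhere outside $S$, and the only delicate region is a neighbourhood of $S$ itself.

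The analytic heart is a Hessian comparison estimate. Writing $r:=d_S$ and $a:=\sqrt{-K}$, I would start from $\mathrm{Hess}\, f = 2r\,\mathrm{Hess}\, r + 2\,dr\otimes dr$ and use that $\nabla r$ is a unit field with $\mathrm{Hess}\, r(\nabla r,\cdot)=0$. Hence $\nabla r$ is an eigendirection of $\mathrm{Hess}\, f$ with eigenvalue $2$, while on $(\nabla r)^\perp$ one has $\mathrm{Hess}\, f = 2r\,\mathrm{Hess}\, r$. Jacobi--field comparison for the distance to a totally geodesic submanifold, under the upper bound $K_N\le K<0$, gives $\mathrm{Hess}\, r(W,W)\ge a\tanh(ar)\,\|W\|^2$ for every $W\perp\nabla r$ (the directions normal to $S$ satisfy the stronger bound $a\coth(ar)$). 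Therefore every eigenvalue of $\mathrm{Hess}_p\, f$ at a point with $r(p)=\rho$ is bounded below by $\min\{2,\,2\rho a\tanh(a\rho)\}$. Because $\rho\mapsto \rho a\tanh(a\rho)$ is increasing and strictly positive for $\rho>0$, fixing any $\rho_0>0$ yields a uniform bound $\mathrm{Hess}\, f\ge c_0\, h$ on $\{r\ge\rho_0\}$, where $c_0:=\min\{2,\,2\rho_0 a\tanh(a\rho_0)\}>0$.

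With this in hand I would compute the pulled--back trace via the singular--value formula of Lemma \ref{lem:pullvexlocally}: for every $p$ with $d_S(u(p))\ge\rho_0$,
$$
\mathrm{tr}_g(u^*\mathrm{Hess}\, f)(p)=\sum_{i=1}^{k}\lambda_i^2\,\mathrm{Hess}\, f(w_i,w_i)\ge c_0\sum_{i=1}^{k}\lambda_i^2 = c_0\, e(u)(p).
$$
Since $u$ is non-constant at infinity, $e(u)\ge c>0$, so $\mathrm{tr}_g(u^*\mathrm{Hess}\, f)\ge c_0c>0$ there; that is, $f$ is strongly pullvex with respect to $u$ on $f^{-1}[\rho_0^2,\infty)$. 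I would then argue by contradiction: if $d_S^2\circ u$ were bounded with finite supremum $s$, then (excluding the trivial case $u(M)\subseteq S$, for which the composition is identically $0$ and the statement is to be read as vacuous) one has $s>0$; choosing $\rho_0$ with $\rho_0^2<s$ places the data exactly in the hypotheses of Theorem \ref{thm:HOY}(b), whose conclusion that $f\circ u$ is unbounded contradicts $s<\infty$.

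I expect the main obstacle to be the Hessian comparison step, specifically the control of the directions tangent to $S$: there the Hessian of $d_S^2$ degenerates as one approaches $S$ (it vanishes identically in the Euclidean model), which is exactly why a global strong pullvexity bound fails and why restricting to $\{r\ge\rho_0\}$ together with the localised form Theorem \ref{thm:HOY}(b) is essential. Secondary care is needed to justify smoothness of $d_S$ off $S$ in the Cartan--Hadamard setting and to record the correct direction of the comparison inequality, namely that the upper curvature bound $K_N\le K$ produces a \emph{lower} bound on $\mathrm{Hess}\, r$.
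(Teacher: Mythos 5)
Your proposal is correct and follows the same overall strategy as the paper: a Hessian comparison for $d_S^2$ under the upper curvature bound, the singular--value identity $\mathrm{tr}_g(u^*\mathrm{Hess}\, d_S^2)=\sum_i\lambda_i^2\,\mathrm{Hess}\, d_S^2(w_i,w_i)\geq c_0\,e(u)$, and an application of Theorem \ref{thm:HOY}. Where you genuinely diverge is in the convexity step, and your version is the more careful one. The paper cites Theorem 4.1 of Bishop--O'Neill and asserts that $K_N\leq K<0$ upgrades strict convexity of $d_S^2$ to \emph{strong} convexity, i.e. a uniform bound $\mathrm{Hess}\, d_S^2(w_i,w_i)\geq c^2$ valid everywhere, and then concludes via Theorem \ref{thm:HOY} with a global estimate. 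As you point out, such a uniform bound fails whenever $\dim S\geq 1$: since $d_S^2$ vanishes identically on $S$, its Hessian vanishes on $TS$ at points of $S$, and in Fermi coordinates around a geodesic in $\mathbb{H}^2$ one computes $\mathrm{Hess}\,(d_S^2)(e_s,e_s)=2\rho\tanh\rho\to 0$ as $\rho\to 0$. Strong pullvexity therefore only holds on $\{d_S\geq\rho_0\}$, and your repair --- choosing $\rho_0^2<\sup(d_S^2\circ u)$ and invoking the localised statement Theorem \ref{thm:HOY}(b) --- is exactly what is needed; it closes a real gap in the paper's two-line argument. Your comparison inequalities ($a\tanh(ar)$ for the $S$-tangent Jacobi fields, $a\coth(ar)$ for the normal ones, eigenvalue $2$ in the radial direction, monotonicity of $\rho\mapsto\rho a\tanh(a\rho)$) are stated with the correct direction.

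One point deserves emphasis: the case $\sup(d_S^2\circ u)=0$, i.e. $u(M)\subseteq S$, is not merely a degenerate case --- it is an actual counterexample to the statement as written when $\dim S\geq 1$. Take the totally geodesic inclusion of $\mathbb{H}^2$ into $\mathbb{H}^3$: the domain is stochastically complete, the map is a harmonic isometric immersion (hence non-constant at infinity), and $d_S^2\circ u\equiv 0$ is bounded. Your explicit exclusion of this case is therefore not pedantry but a necessary hypothesis that the proposition, and the paper's proof of it, silently omit.
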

\begin{proof}
     By Theorem 4.1 in \cite{BishopOneill}, the function $d_S^2$ is well-defined. The only adaptation of the proofs is in the form of replacing $K_N<0$ by $K_N\leq K<0$ which produces strong convexity instead of strict convexity as in the original article of Bishop and O'Neill.
    
    \medskip
    Now the estimate
    $$
    \Delta_M(d_S^2\circ u) = \sum_{i=1}^m \lambda_i^2\mathrm{Hess}\,d_S^2(w_i,w_i) \geq c^2 \sum_{i=1}^m \lambda_i^2 = c^2 e(u)
    $$
    holds and an application of Theorem \ref{thm:HOY} proves the statement.
\end{proof}

\begin{ex}
    \begin{enumerate}
        \item There are no minimal submanifolds in a Cartan-Hadamard manifold $(N,h)$ with $K_N\leq K<0$ that are stochastically complete and bounded.
        \item Let $\gamma$ be a complete geodesic in the hyperbolic space $\mathbb{H}^n$ and let $S=\mathrm{Im}\, \gamma$ be the totally geodesic submanifold. Then the previous proposition states that in any tubular neighbourhood $T_R(\gamma) \subseteq \mathbb{H}^n$ of length $R$ there is no stochastically complete minimal submanifold 
    \end{enumerate}
\end{ex}

\paragraph{Cone and wedge theorems}

First, we will recall a definition of perturbed cones appearing in the paper \cite{ABG} and extend it to perturbed wedges.


\begin{figure}[ht]
    \centering
    \includegraphics{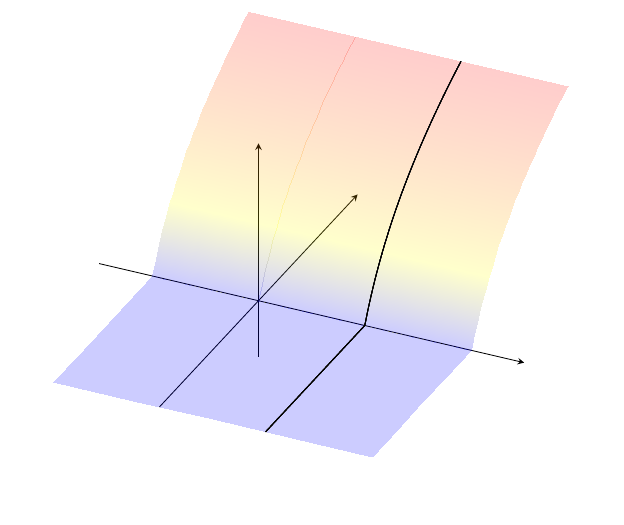}
    \caption{A wedge $C\times \R$ with $C$ defined as the graph of $\log(x+1)$ for $x\geq0$ and $0$ for $x<0$.}
    \label{fig:examplewedge}
\end{figure}


\begin{defn}\label{def:ConeWedge}
    \begin{enumerate}
        \item A connected open set $R\subseteq \R^n$ is said to possess the \textbf{enclosing property} if, for every $p\in R$, there is an affine hyperplane $H$ such that the connected component $B$ of $R\setminus H$ containing $p$ is precompact.
        \item Let $C$ be a closed, path-connected subset of $\R^n$ such that $\R^n\setminus C$ consists of at least two connected components. Let $R_C$ be one of the connected components of $\R^n\setminus C$. If $R_C$ satisfies the enclosing property, then $C$ is called a \textbf{perturbed cone} in Euclidean space and $R_C$ a \textbf{cone region}.
        \item Let $n,k \in \mathbb{N}$ with $n > k$ and $C$ a perturbed cone in $\R^{k}$. A \textbf{perturbed wedge of type $(n,k)$} is $W:= C \times \R^{n-k}$ and is said to have $(n-k)$ flat directions. The \textbf{wedge region} is $R_W = R_C \times \R^{n-k}$ where $R_C$ is a cone region of the cone $C$.
    \end{enumerate}
\end{defn}

An example is visualised in Figure \ref{fig:examplewedge}. This perturbed wedge is not contained in any wedge (in the classical sense). For more examples, just take the examples of perturbed cones in \cite{ABG} and cross them with some $\R^{n-k}$.

\medskip

The next theorem was inspired by Theorem 1.5 in \cite{Borbely} and Theorem 3.4 in \cite{ABG}.

\begin{thm}[Perturbed wedge theorem]\label{thm:perturbedwegde}
    Let $(M,g)$ be a stochastically complete manifold and $u:M \rightarrow \R^n$ a harmonic map that is non-singular at infinity. The image of $u$ cannot be contained in a wedge region of a perturbed wedge of type $(n,k)$ with $\dim M + k-n \geq 1$.
\end{thm}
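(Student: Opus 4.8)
The plan is to argue by contradiction and to reduce the statement to the cone case, where the enclosing property can be exploited. Assume $u(M)\subseteq R_W=R_C\times\R^{n-k}$ and let $P:\R^n\to\R^k$ be the orthogonal projection onto the cone factor, so that $v:=P\circ u$ maps $M$ into $R_C$. The projection $P$ is linear, hence totally geodesic, and it is a Riemannian submersion onto $\R^k$, i.e. strongly horizontally conformal with conformal factor $1$ and $\mathrm{rk}\,P=k$. Since $u$ is non-singular at infinity we have $\mathrm{rk}\,u=\dim M$, and the hypothesis $\dim M+k-n\geq 1$ is precisely the rank condition $\mathrm{rk}\,u+\mathrm{rk}\,P-\dim N\geq 1$ of Theorem \ref{thm:tomography} (a)(ii'). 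Running the pullvexity computation of that theorem shows that for any strongly convex $\eta:\R^k\to\R$ the function $f:=\eta\circ P$ is strongly pullvex with respect to $u$ with a uniform positive constant; equivalently the energy density $e(v)=\sum_j\mu_j^2$ (with $\mu_j$ the singular values of $v$) is bounded below by some $c^2>0$, so $v$ is non-constant at infinity. This reduces the theorem to the following cone statement: a harmonic map $v:M\to\R^k$ with $e(v)\geq c^2>0$ and $M$ stochastically complete cannot have its image inside a cone region $R_C$.

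For the cone step I would follow the barrier scheme behind Theorem 3.4 in \cite{ABG} and Theorem 1.5 in \cite{Borbely}, now driven by the localized principle of Theorem \ref{thm:HOY} (b). Although $v$ is forced to be unbounded (apply Theorem \ref{thm:HOY} (a) to $\eta(y)=|y-y_0|^2$), the enclosing property confines the escape: for a point $q_0\in v(M)$ it produces an affine hyperplane $H=\{\langle a,\cdot\rangle=t_0\}$ whose cap $B$, the component of $R_C\cap\{\langle a,\cdot\rangle<t_0\}$ containing $q_0$, is precompact. I would use $H$ to manufacture a function $\eta:\R^k\to\R$ that is bounded above on all of $R_C$, is strongly convex on its top superlevel set, and whose superlevel set $\{\eta\geq r\}\cap R_C$ is contained in the precompact cap $B$. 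On such a cap strong convexity and boundedness from above are compatible, so $f:=\eta\circ P$ satisfies $\sup_M(f\circ u)<\infty$ while $f$ is strongly pullvex with respect to $u$ on $f^{-1}[r,\infty)$ for some $r<\sup(f\circ u)$; any maximising sequence must enter this superlevel set, and Theorem \ref{thm:HOY} (b) then yields the desired contradiction.

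The main obstacle is exactly this construction for a genuinely perturbed, non-convex cone. A globally strongly convex function is unbounded from above on the unbounded opening of $R_C$, so global strong convexity and global boundedness from above are irreconcilable; the enclosing property is precisely what localizes the strong convexity to the precompact caps. The delicate technical point is to control the sign of the pulled-back Hessian $\mathrm{tr}_g(u^*\mathrm{Hess}\,f)$ on the cap: the barrier will be strongly convex transverse to the direction $a$ but may be concave along $a$, so one must ensure that the transverse strong convexity dominates any axial concavity uniformly along $v(M)\cap B$. Here the energy lower bound $e(v)\geq c^2$ inherited from the rank condition is essential. One also has to verify that the non-singularity of $u$ genuinely survives the projection $P$, so that the positivity constants do not degenerate, and to deal with the non-attainment of the supremum through the maximising sequence, as recorded in Remark \ref{rmk:OYobservation}.
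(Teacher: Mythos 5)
Your first step matches the paper exactly: project via the totally geodesic orthogonal projection $P$ onto the cone factor, use the rank condition $\dim M + k - n \geq 1$ as the hypothesis (ii') of Theorem \ref{thm:tomography} (a), and observe that the problem reduces to trapping the harmonic map $v = P\circ u$ inside the precompact cap $B$ produced by the enclosing property. The gap is in the second step: you correctly identify that everything hinges on manufacturing a barrier $\eta$ that is bounded above on $R_C$, strongly convex on a top superlevel set, and whose superlevel set is confined to $B$ --- and then you leave this construction open, calling it ``the main obstacle.'' Without an explicit $\eta$ the argument is not a proof. Moreover, the difficulty you anticipate (a barrier strongly convex transverse to the direction $a$ but possibly concave along $a$, with the transverse convexity having to dominate the axial concavity along $v(M)$) would indeed be delicate, and the energy bound $e(v)\geq c^2$ alone would \emph{not} suffice to control a partially concave Hessian, since one would need to know how the energy distributes among directions.

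The paper sidesteps this entirely with a barrier that is strongly convex in \emph{every} direction: it takes $\eta = \chi\, d_Q^2$, where $d_Q^2$ is the squared distance in $\R^k$ from a point $Q = q - d\nu$ placed \emph{behind} the enclosing hyperplane $H$ (here $q=(P\circ u)(p)$ lies in the cap $B$ and $\nu$ is the unit normal of $H$ pointing into $B$), with the radius $d$ chosen so that $H\cap\partial B \subseteq B_d(Q)$ but $B\not\subseteq B_d(Q)$, and $\chi$ a bump function equal to $1$ on $\overline{B}$ and decaying rapidly to $0$ in an $\varepsilon$-neighbourhood. Boundedness of $f\circ u$ comes from the cutoff (the support of $\eta$ is precompact), not from any anisotropy of the Hessian; the trapping of the maximising sequence inside $B$ comes from the geometry of the sphere $\partial B_d(Q)$: on the only exit face $H\cap\partial B$ of the cap one has $d_Q^2\leq d^2$, while $\sup_B d_Q^2 > d^2$, so the supremum of $\eta\circ v$ is approached inside $\overline{B}$, where $\chi\equiv 1$ and $\mathrm{Hess}\,\eta = 2\,\mathrm{Id}$ is uniformly positive. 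There Theorem \ref{thm:tomography} (a) together with Remark \ref{rmk:tomography} (c) (the localised version you invoke via Theorem \ref{thm:HOY} (b)) gives the contradiction. If you replace your unspecified barrier by this one, your argument closes and coincides with the paper's.
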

\begin{proof}
    Let us assume that the image of $u$ is contained in a perturbed wedge $W=C\times \R^{n-k}$ of type $(n,k)$ with underlying perturbed cone $C\subseteq \R^k$. Denote by $P$ the orthogonal projection $\R^n \rightarrow \R^n$ with image $\R^k$ and let $d_Q^2(x)= (d(x,Q))^2$ be the square of the distance function in $\R^k$ from the point $Q$, where $Q$ is chosen the following way:
    
    \medskip
    
    \begin{minipage}[b]{0.4\textwidth}
    Take any $p\in M$ and let $q = (P\circ u)(p) \in \R^k$. Denote by $H$ the hyperplane and by $B$ the precompact set with $q\in B$ obtained via the enclosing property. Let $\nu$ be the unit normal of $H$ pointing into $B$ and define $Q= q - d\nu$. Choose $d$ such that $H\cap\partial B \subseteq B_d(Q)$ but $B \not\subseteq B_d(Q)$. Let $\chi$ be a bump function which is $1$ on the closure of $B$ and falls off rapidly to $0$ in an $\varepsilon$-neighbourhood $B$. By defining $\eta = \chi d_Q^2$ we can apply Theorem \ref{thm:tomography} (a) with Remark \ref{rmk:tomography} (c).
    \end{minipage}
    \begin{minipage}[b]{0.6\textwidth}
    \centering
        \begin{tikzpicture}[scale=0.7]
			\draw [blue,dashed,thick,domain=90:450] plot ({2*cos(\x)}, {2*sin(\x)}) node[anchor=south]{$\partial B_d(Q)$};
			\draw [-,thick] (-3, -1)  -- (-1, -1.72) -- (0,-4) -- (1, -3) -- (1.5,-5) -- (3,1)  -- (2,0) -- (3,3) node[anchor=west]{$C$};
            \draw [-,dotted,thick]  (2,0) -- (-1, -1.72) node[above]{$H$};
            \filldraw[black] (1,-1.72) circle (2 pt) node[anchor=west]{$q$};
            \filldraw[black] (0,0) circle (2 pt) node[anchor=west]{$Q$};
    \end{tikzpicture}
    
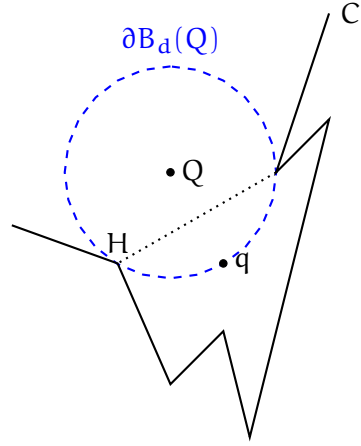
\captionof{figure}{The choice of $Q$.}
    \end{minipage}
\end{proof}
\begin{remark}
    The Perturbed wedge theorem is related to several cone and wedge-like theorems in the literature
    \begin{enumerate}
        \item Theorem 3.4 in \cite{ABG} is an analogue of Theorem \ref{thm:perturbedwegde} which states that there are no proper non-constant harmonic maps with their image inside a perturbed cone (i.e. a wedge with $k=0$ flat directions). 
        \item Theorem 1.28 of \cite{maxprinciples} is also a special case, as non-degenerate cones are special cases of wedges.
        \item By considering stochastically complete minimal surfaces for $n=3$ and $k=2$, Theorem 1.5 of \cite{Borbely} is recovered as a special case. 
    \end{enumerate}
\end{remark}

\paragraph{Simplex traps}

Now we will derive a local manifestation of the perturbed wedge theorem. We will work in $\R^n$ equipped with the coordinates $x=(x_1,\dots,x_n)$ and $H$ denotes the hyperplane defined by $\{x\in \R^n\, \mid \, x_n = 0\}$.

\medskip

Recall that an $N$-simplex $\Delta$ in $\R^n$ with $N\leq n$ is a convex hull $$ \Delta = \left\{\sum_{i=0}^N t_ip_i\,\bigg| \, t_0 + \dots + t_n = 1 \text{ and } t_i\geq 0 \text{ for all }i=0,\dots,N\right\} $$ of $N+1$ point $p_0,\dots,p_N$ under the generic condition that the vectors $p_0 - p_1,\dots,p_0-p_N$ are linearly independent. 

\begin{defn}\label{def:simplextrap}
    Let $\Delta$ be an $N=n-2$ simplex inside $H\subseteq \R^n$, $R\in (0,\infty)$ and choose a unit normal vector $\nu$ for $\Delta \subseteq H$.
    A \textbf{simplex trap with base in $H$} is the set $$T = \left(\bigcup_{s,t \in [0,R]} \Delta_{s,t}\right) \setminus (\Delta_{R,R})_R$$
    where $\Delta_{s,t}:= \Delta + s \nu + t e_n$, $e_n = (0,\dots, 0,1)$ and $(\Delta_{R,R})_R = \{x\in \R^n\, \mid \, d(\Delta_{R,R},x)\leq R\}$ is the closed $R$-neighbourhood of $\Delta_{R,R}$. The boundary of $T$ is the topological boundary $\partial T$ without the boundary of $(\Delta_{R,R})_R$.  A \textbf{simplex trap} $T$ is a rotated and translated simplex trap with a base in $H$.
\end{defn}

\begin{figure}[ht]
    \centering
    \includegraphics{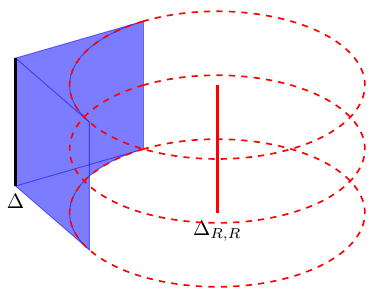}
    \caption{A simplex trap $T$ in $\R^3$}
    \label{fig:simplex trap}
\end{figure}

The following theorem should be treated as an interpretation for simplex traps as measuring tools that can be applied to pieces of minimal hypersurfaces. The conclusion will be that a minimal hypersurface touching a simplex trap has to touch its boundary as well. 

\begin{thm}[Simplex trap theorem]\label{thm:simplextrap}
    Let $M$ be a stochastically complete manifold and $u:M \rightarrow \R^n$ a harmonic immersion that is non-singular at infinity with $\dim M = n-1$. Any simplex trap intersecting $u(M)$ needs to intersect the boundary of $T$ as well.
\end{thm}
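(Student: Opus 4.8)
The plan is to read a simplex trap as a bounded, localised perturbed wedge of type $(n,2)$ and to rerun the proof of Theorem~\ref{thm:perturbedwegde}, localised through Theorem~\ref{thm:HOY}~(b). After rotating and translating I may assume the base sits in $H=\{x_n=0\}$ with $\mathrm{aff}(\Delta)$ passing through the origin. Set $V:=\mathrm{aff}(\Delta)$, an $(n-2)$-plane, and $W:=\mathrm{span}(\nu,e_n)$, the orthogonal $2$-plane, so that $\R^n=V\oplus W$, and let $P:\R^n\rightarrow W\cong\R^2$ be the orthogonal projection; it is totally geodesic and strongly horizontally conformal of rank $2$. As $u$ is non-singular at infinity with $\dim M=n-1$, we have $\mathrm{rk}\,u=n-1$, so the rank condition $\mathrm{rk}\,u+\mathrm{rk}\,P-n=(n-1)+2-n=1\geq1$ holds; this is precisely case~(a)(ii') of Theorem~\ref{thm:tomography}. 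Hence, for every strongly convex $\eta:W\rightarrow\R$ the function $f:=\eta\circ P$ is strongly pullvex with respect to $u$, with the lower bound $c_1^2c_3$ of that proof.

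Under $P$ the trap projects to the bounded planar region $\mathcal C\subseteq W$ obtained from the square $[0,R]^2$ in the $(\nu,e_n)$-coordinates $(s,t)$ by removing the closed disk of radius $R$ about $(R,R)$: the two straight edges $\{s=0\}$, $\{t=0\}$ together with the swept faces over $\partial\Delta$ are the image of the relevant boundary $\partial T$, while the bounding circular arc is the image of the spherical cut, which is excluded from $\partial T$. I argue by contradiction: assume $u(M)\cap T\neq\emptyset$ yet $u(M)\cap\partial T=\emptyset$. Fix $p\in M$ with $u(p)\in T$ and put $q:=(P\circ u)(p)\in\mathcal C$. Since $\mathcal C$ is a bounded capped planar cone region, the enclosing property of Definition~\ref{def:ConeWedge} supplies a line $\ell\subseteq W$ cutting off a precompact piece $B\ni q$. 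Exactly as in the proof of Theorem~\ref{thm:perturbedwegde}, I would take the inward unit normal $\nu'$ of $\ell$, put $Q:=q-d\nu'$ with $\overline B\cap\ell\subseteq B_d(Q)$ but $B\not\subseteq B_d(Q)$, choose a bump function $\chi$ equal to $1$ on $\overline B$ and decaying rapidly to $0$ past $B$, and set $\eta:=\chi\,d_Q^2$ and $f:=\eta\circ P$.

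Finally I would extract the contradiction. The support of $\chi$ is compact, so $f\circ u$ is bounded on $M$; moreover $f(u(p))=d_Q^2(q)=d^2=:r$ and, since $\chi\equiv1$ near $q$, the function $f$ is strongly pullvex with respect to $u$ in a neighbourhood of $p$, whence $f\circ u$ is strictly subharmonic there and cannot attain a local maximum at $p$. Therefore $\sup(f\circ u)>r$. By the design of $Q,\ell,\chi$ the superlevel set $f^{-1}[r,\infty)$ lies over $\overline B$ away from $\ell$, where $\chi\equiv1$ and $d_Q^2$ is strongly convex, so $f$ is strongly pullvex there; because $u(M)$ avoids $\partial T$, hence the edges of $\mathcal C$, the trace of this superlevel set on $u(M)$ stays inside the pullvex region. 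Theorem~\ref{thm:HOY}~(b), together with Remark~\ref{rmk:tomography}~(c), then forbids a bounded $f\circ u$ with $\sup>r$, which is the desired contradiction, so $u(M)$ must meet $\partial T$.

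I expect the main obstacle to be the geometric bookkeeping of the last two paragraphs: one must choose $\ell,B,Q,d,\chi$ and the threshold $r$ so that the only locus where $f$ fails to be strongly pullvex is a neighbourhood of the projection of $\partial T$, and then invoke $u(M)\cap\partial T=\emptyset$ to confine the maximising sequence to the pullvex region. Checking that the enclosing property of the capped corner $\mathcal C$ supplies such data, and that the localisation of Theorem~\ref{thm:HOY} applies verbatim as in Theorem~\ref{thm:perturbedwegde}, is the delicate point.
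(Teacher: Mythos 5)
Your reduction to a planar wedge breaks down at the step ``because $u(M)$ avoids $\partial T$, hence the edges of $\mathcal C$''. The boundary of the trap consists of the faces $\Delta_{s,0}$, $\Delta_{0,t}$ and the swept faces over $\partial\Delta$, all lying over the \emph{compact} simplex $\Delta$, whereas $P^{-1}(\{s=0\}\cup\{t=0\})$ is the full affine extension $\mathrm{aff}(\Delta)\times(\text{edges})$. A connected piece of $u(M)$ that enters $T$ and avoids $\partial T$ must exit through the mouth $\partial(\Delta_{R,R})_R\cap B$, and it must exit, since by Theorem \ref{thm:calabi} the image is unbounded while $T$ is bounded; once outside it is unconstrained and will in general meet $P^{-1}$ of the edges and of the transition annulus of $\chi$ at points whose $\mathrm{aff}(\Delta)$-component lies far from $\Delta$. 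Your claim that $f^{-1}[r,\infty)$ lies over $\overline B$ where $\chi\equiv1$ is also false as stated: since $B\not\subseteq B_d(Q)$ and $d_Q^2$ is convex, its maximum over $\overline B$ exceeds $d^2$ and is attained on $\partial B\setminus\ell$, so by continuity there is a thin shell just outside $\overline B$ where $\chi<1$ yet $\chi\,d_Q^2>d^2=r$. The superlevel set $f^{-1}[r,\infty)$ therefore contains an unbounded cylinder meeting this shell, the hypothesis $u(M)\cap\partial T=\emptyset$ does not keep the image out of it, and Theorem \ref{thm:HOY}~(b) cannot be invoked. In short, the projection $P$ discards exactly the $n-2$ directions in which the trap is compact; the wedge argument is designed for regions that are genuinely a product with $\R^{n-k}$ and does not localise here.

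The paper avoids this by never projecting: it sets $f=\chi\,d^2_{\Delta_{R,R}}$, where $d^2_{\Delta_{R,R}}$ is the distance squared in $\R^n$ from the compact simplex $\Delta_{R,R}$ and $\chi$ is a bump function on $\R^n$ equal to $1$ on $B=\bigcup_{s,t}\Delta_{s,t}$ and vanishing outside an $\varepsilon/2$-neighbourhood of $B$. Then $f\circ u$ vanishes on the part of the image far from the trap; the transition region of $\chi$ is either within $\varepsilon$ of $\partial T$ (excluded by hypothesis) or near the mouth, where $f\le(R+\varepsilon/2)^2$; and on $T$ itself $f$ is strongly $(n-1)$-mean convex, hence strongly pullvex for the rank-$(n-1)$ map $u$, with $f(u(p))>R^2$. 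Theorem \ref{thm:tomography}~(a) together with Remark \ref{rmk:tomography}~(c) then yields the contradiction, and letting $\varepsilon\to0$ finishes. If you wish to keep your two-step structure you would have to cut off additionally in the $\mathrm{aff}(\Delta)$-directions, but the resulting function is no longer of the form $\eta\circ P$ and you essentially recover the paper's construction.
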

\begin{proof}
    We will prove the statement for the simplex trap with base in $H$, i.e. the data $\Delta$, $R\in (0,\infty)$ and a unit normal $\nu$ is given. Suppose there is $p\in M$ with $u(p)\in T$ but the image $u(M)$ does not intersect an $\varepsilon$-neighbourhood of the boundary of $T$. We can take the distance-squared function $d^2_\Delta$ from $\Delta_{R,R}$ and multiply it with the bump function $\chi$ which is constant $1$ on $B = \bigcup_{s,t \in [0,R]} \Delta_{s,t}$ and is $0$ outside an $\varepsilon/2$-neighbourhood of $B$, i.e. we define $f = \chi  d_\Delta^2$. Note that inside $T$ the localised distance-squared function $f$ is strongly $(n-1)$-mean convex. By construction, the maximum of $f$ in $T$ is at $\Delta_{0,0}$, i.e. it is bounded from above by $2R^2$. Moreover, by assumption $f(u(p)) > R^2$. Thus, an application of Theorem \ref{thm:tomography} (a) and Remark \ref{rmk:tomography} (c) produces a contradiction to the boundedness of $f \circ u$ and $u(M)$ needs to intersect the $\varepsilon$-neighbourhood of the boundary of $T$. Sending $\varepsilon \rightarrow 0$ proves the theorem.
\end{proof}
As a simple application, we can deduce that a minimal surface in $\R^3$ cannot have a piece that looks like the region of the minimum of the paraboloid. Of course, this can be computed as the paraboloid would have two positive principal curvatures at the point, but the proof via a simplex trap does not need any computation. 

\begin{remark}
    It is interesting to note that a simplex trap is a metric definition, as it only requires simplices and the ability to measure lengths. Therefore, an extension to Riemannian manifolds could be a viable option.
\end{remark}

\subsection{Parabolicity and halfspaces}

In this section, we will consider a simple combination of pullvexity with the concept of parabolicity.
 
\paragraph{Parabolic manifolds and recurrence}
Parabolicity arose in potential theory and was applied extensively to (sub)harmonic functions. The following definition is classical.
\begin{defn}
    A Riemannian manifold $(M,g)$ is said to be \textbf{parabolic} if any smooth function $u:M \rightarrow \R$ bounded from above satisfying $\Delta u \geq 0$ is constant.
\end{defn}
Some basic examples of parabolic manifolds are $\R,\R^2$ and any compact manifold. It can be shown that any complete Riemannian manifold with volume growth of geodesic balls bounded by $r^2$ is parabolic. The fact that $\R^3$ is non-parabolic already shows that parabolicity is a very restrictive property.

\medskip

As the weak Omori-Yau maximum principle is related to stochastic completeness, there is also a probabilistic interpretation of parabolicity. Namely, the Brownian motion $B_t:(\Omega,\mathcal{F},\mathbb{P}) \rightarrow M$ being recurrent, i.e. for any open set $U \subseteq M$ the probability of almost every trajectory of the Brownian motion $B_t$ starting at the point $p\in M$ reaching $U$ is $1$. In formulas, this can be written as $$\mathbb{P}_p(\{\omega\in \Omega \, \mid \, B_{t_*}(\omega)\in U \text{ for some }t_*>0  \})=1.$$ A valuable resource containing more on parabolicity is the article \cite{Grigoryan}.

\paragraph{Halfspace results for parabolic manifolds}
The following technical lemma states that bounded parabolic manifolds are contained in level sets of pullvex functions.

\begin{lemma}[Level set lemma]
Let $u:M\rightarrow N$ be harmonic and $f:N \rightarrow \R^k$ be a map with its component functions $f_1,\dots, f_k$ being pullvex with respect to $u$. If $f\circ u$ is bounded, then $u(M)$ is contained in $\bigcap_{i=1}^kf^{-1}_i(c_i)$ for $c_i\in \R$.
\end{lemma}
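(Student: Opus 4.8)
The plan is to reduce the statement to the defining property of parabolicity applied separately to each of the $k$ scalar functions $f_i \circ u$. The framing paragraph makes clear that $M$ is assumed parabolic, and this is precisely the hypothesis I would lean on: unlike in Theorem \ref{thm:HOY}, here I only have plain pullvexity (the inequality $\geq 0$, not $\geq C > 0$), so the Omori-Yau machinery does not force unboundedness; instead parabolicity will force constancy.

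First I would show that each component $f_i \circ u$ is subharmonic on $M$. Fix $i \in \{1,\dots,k\}$ and apply the composition formula of Proposition \ref{prop:compositionformula} (b) to the scalar-valued map $f_i$, giving
\[
\Delta_M(f_i \circ u) = df_i(\Delta u) + \mathrm{tr}_g(u^*\mathrm{Hess}\, f_i).
\]
Since $u$ is harmonic we have $\Delta u = 0$, so the first term vanishes and $\Delta_M(f_i \circ u) = \mathrm{tr}_g(u^*\mathrm{Hess}\, f_i) \geq 0$ by the pullvexity of $f_i$ with respect to $u$ (Definition \ref{def:pullvexity}). Hence each $f_i \circ u$ is subharmonic.

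Next, boundedness of $f \circ u$ as a map into $\R^k$ means each component $f_i \circ u : M \to \R$ is bounded, in particular bounded from above. Thus $f_i \circ u$ is a subharmonic function on the parabolic manifold $M$ that is bounded from above, and the definition of parabolicity forces $f_i \circ u \equiv c_i$ for some constant $c_i \in \R$. Consequently $u(M) \subseteq f_i^{-1}(c_i)$ for every $i$, and intersecting over $i = 1,\dots,k$ yields $u(M) \subseteq \bigcap_{i=1}^k f_i^{-1}(c_i)$, as claimed.

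I do not anticipate a serious obstacle; the argument is short. The only conceptual point requiring care is to notice that plain (non-strong) pullvexity is exactly what makes each $f_i \circ u$ subharmonic, so that parabolicity --- rather than the stronger argument of Theorem \ref{thm:HOY} --- is the correct tool here. I would also add the caveat that the constants $c_i$ are determined by $u$ and need not be regular values of the $f_i$, so $\bigcap_{i=1}^k f_i^{-1}(c_i)$ should be read simply as an intersection of level sets and not necessarily as a smooth submanifold.
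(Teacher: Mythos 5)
Your proposal is correct and follows essentially the same route as the paper: the composition formula plus harmonicity of $u$ gives subharmonicity of each $f_i\circ u$, and parabolicity of $M$ (implicit in the paper's statement but, as you rightly note, the necessary hypothesis from the surrounding paragraph) forces each bounded subharmonic $f_i\circ u$ to be constant. Your explicit flagging of where parabolicity enters is in fact slightly more careful than the paper's own one-line justification.
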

\begin{proof}
The composition formula for the Laplacian implies
$$
\Delta_M( f_i \circ u) = \mathrm{tr}_g(u^* \mathrm{Hess}\, f_i)\geq 0
$$
and hence $f_i \circ u$ is subharmonic. By the boundedness assumption $f_i \circ u \equiv c_i\in \R$ and hence $u(M) \subseteq f_i^{-1}(c)$. Since this is true for all $i$, we obtain $u(M) \subseteq \bigcap_{i=1}^kf^{-1}_i(c_i)$.
\end{proof}
Next, we prove a halfspace result for parabolic manifolds.
\begin{prop}\label{prop:parabolichalfspace}
    Let $(M,g)$ be a parabolic manifold and let $u:M \rightarrow \R^n$ be a harmonic map. If the image of $u$ is contained in a halfspace, then it is contained in a hyperplane parallel to the boundary of the halfspace. 
\end{prop}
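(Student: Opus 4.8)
The plan is to reduce the statement to the definition of parabolicity by choosing the right \emph{height function}. Write the halfspace as $\{x \in \R^n \mid \langle x, \nu\rangle \leq c\}$, where $\nu$ is a unit vector normal to the bounding hyperplane, and consider the affine function $f : \R^n \to \R$ given by $f(x) = \langle x, \nu\rangle$. The boundary of the halfspace is the level set $f^{-1}(c)$, and every hyperplane parallel to it is a level set $f^{-1}(c')$, so it suffices to prove that $f\circ u$ is constant on $M$.

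First I would record that $f$ is affine, hence $\mathrm{Hess}\, f = 0$; in particular $u^*\mathrm{Hess}\, f = 0$, so $f$ is (trivially) pullvex with respect to $u$. Applying the composition formula (Proposition \ref{prop:compositionformula} (b)) together with the harmonicity $\Delta u = 0$ gives
\[
\Delta_M(f\circ u) = df(\Delta u) + \mathrm{tr}_g(u^*\mathrm{Hess}\, f) = 0,
\]
so $f\circ u$ is harmonic, and in particular $\Delta_M(f\circ u)\geq 0$, i.e. subharmonic on $M$.

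Next, the hypothesis that $u(M)$ lies in the halfspace says precisely that $f\circ u \leq c$, so $f\circ u$ is a smooth subharmonic function bounded from above. Parabolicity of $(M,g)$ then forces $f\circ u \equiv c'$ for some constant $c' \leq c$. Hence $u(M) \subseteq f^{-1}(c') = \{x \in \R^n \mid \langle x,\nu\rangle = c'\}$, a hyperplane parallel to the boundary of the halfspace, as required. Equivalently, this is just the Level set lemma applied to the single component function $f$.

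There is no substantial obstacle here: the entire content is the observation that the linear height function normal to the halfspace has vanishing Hessian, so its composition with a harmonic map is harmonic, after which parabolicity does all the work. The only point requiring minor care is to match the hypotheses of parabolicity exactly---boundedness \emph{from above} together with $\Delta_M(f\circ u)\geq 0$---which is why the affine (rather than merely convex) nature of $f$ is exploited: it guarantees subharmonicity without any assumption on the singular values of $u$ or on curvature.
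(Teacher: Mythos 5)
Your proof is correct, and it is structurally the same as the paper's: pick a real-valued function on the halfspace whose Hessian pulls back non-negatively along $u$, use the composition formula to see that its composition with $u$ is subharmonic and bounded from above, and invoke parabolicity to conclude that it is constant, hence that $u(M)$ lies in a level set. The only genuine difference is the choice of test function. You take the affine height function $f(x)=\langle x,\nu\rangle$, whose Hessian vanishes, so $f\circ u$ is honestly harmonic and bounded only from above; this matches the paper's definition of parabolicity verbatim and is the most elementary route, amounting to the classical fact that the components of a harmonic map into $\R^n$ are harmonic functions. The paper instead takes $f(x)=\tfrac{1}{x_n+1}$, a convex function that is bounded on both sides over the halfspace, so that it can quote its Level set lemma, whose hypothesis is stated as boundedness rather than boundedness from above. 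For that reason your closing remark that the argument ``is just the Level set lemma applied to $f$'' is slightly off as literally stated --- your $f\circ u$ is not bounded below --- but this is immaterial, since your main argument correctly appeals to the definition of parabolicity directly. Both proofs are sound and give the same conclusion.
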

\begin{proof}
    Let us assume that $u$ takes its values in the upper halfspace $H_n = \R^{n-1}\times (0,\infty)$. The function $f(x) = \frac{1}{x_n+1}$ is bounded on $H_n$ by $1$ and its Hessian is positive semidefinite with $n-1$ eigenvalues $0$ and an eigenvalue $\frac{2}{(x_n+1)^3}$. Then $f \circ u$ is bounded and by the previous lemma $u(M)\subseteq f^{-1}(r)$ for $r\in (0,1)$, i.e. $u(M)$ is contained in a hyperplane parallel to the boundary of the halfspace $\R^{n-1} \times \{0\}$.
\end{proof}
As a basic example, we can consider the harmonic map $u:\R^2 \rightarrow \R^5$ defined by $(s,t) \mapsto (s,t,s^2-t^2,1,1)$. Then Proposition \ref{prop:parabolichalfspace} applies twice.
\paragraph{A question}
We have seen that, in the case of parabolic manifolds, a halfspace result holds for harmonic maps into $\R^n$. Under the assumption of stochastic completeness we could prove a perturbed wedge theorem. Is there a geometric or probabilistic notion on a manifold for which a halfspace result is true?. Perhaps it should be a notion that is more specialised than stochastic completeness but should include parabolic manifolds as an example.

\bibliography{SampsonOmoriYau}
\addcontentsline{toc}{section}{\bibname}
\end{document}